\newcommand{\no}[1]{#1}
\renewcommand{\no}[1]{}
\renewcommand{\Delta}{\upDelta}}
\date{\today}
\newtheorem{theorem}{Theorem}[section]
\newtheorem{proposition}{Proposition}[section]
\newtheorem{lemma}{Lemma}[section]
\newtheorem{corollary}{Corollary}[section]
\theoremstyle{remark}
\newtheorem{remark}{Remark}[section]
\newcommand{\bel}{\begin{equation} \label}
\newcommand{\ee}{\end{equation}}
\newcommand{\C}{{\mathbb C}}
\newcommand{\R}{{\mathbb R}}
\newcommand{\N}{{\mathbb N}}
\newcommand{\B}{{\mathcal B}}
\newcommand{\E}{{\mathcal E}}
\newcommand{\OO}{{\mathcal O}}
\newcommand{\U}{{\mathcal U}}
\newcommand{\Z}{{\mathbb Z}}
\def\beq{\begin{equation}}
\def\eeq{\end{equation}}
\newcommand{\bea}{\begin{eqnarray}}
\newcommand{\eea}{\end{eqnarray}}
\newcommand{\beas}{\begin{eqnarray*}}
\newcommand{\eeas}{\end{eqnarray*}}
\numberwithin{equation}{section}
\title[Stable determination of time-dependent and space periodic scalar potential]{Stable determination of time-dependent scalar potential from boundary measurements in a periodic quantum waveguide}
\author[Mourad Choulli]{Mourad Choulli}
\address{IECL, UMR CNRS 7502, Universit\'e de Lorraine, Ile du Saulcy, 57045 Metz cedex 1, France}
\email{mourad.choulli@univ-lorraine.fr}
\author[Yavar Kian]{Yavar Kian}
\address{CPT, UMR CNRS 7332, Aix Marseille Universit\'e, 13288 Marseille, France, and Universit\'e de Toulon, 83957 La Garde, France}
\email{yavar.kian@univ-amu.fr}
\author[Eric Soccorsi]{Eric Soccorsi}
\address{CPT, UMR CNRS 7332, Aix Marseille Universit\'e, 13288 Marseille, France, and Universit\'e de Toulon, 83957 La Garde, France}
\email{eric.soccorsi@univ-amu.fr}
\date{}
\begin{document}

\begin{abstract}
We prove logarithmic stability in the determination of the time-dependent scalar potential in a $1$-periodic quantum cylindrical waveguide, from the boundary measurements of the solution to the dynamic Schr\"odinger equation.

\medskip
\noindent
{\bf Keywords :} Schr\"odinger equation, periodic scalar potential, infinite cylindrical quantum waveguide, stability inequality.

\medskip
\noindent
{\bf Mathematics subject classification 2010 :} 25R30.
\end{abstract}

\maketitle

\tableofcontents

\section{Introduction}
\label{introduction}
\setcounter{equation}{0}

\subsection{Statement of the problem and existing papers}
In the present paper we consider an infinite waveguide $\Omega=\R \times \omega$, where 
$\omega$ is a bounded domain of $\mathbb{R}^2$ with $C^{2}$-boundary $\partial \omega$.
We assume without limiting the generality of the foregoing that $\omega$ contains the origin.
For shortness sake we write $x=(x_1,x')$ with $x'=(x_2,x_3)$ for every $x=(x_1,x_2,x_3) \in \Omega$. Given $T>0$, we consider the following initial-boundary value problem (abbreviated to IBVP in what follows)

\begin{equation}
\label{(1.1)}
\left\{
\begin{array}{ll}
(-i\partial _t -\Delta u +V(t,x))u=0\quad &\mbox{in}\; Q=(0,T)\times \Omega ,
\\
u(0,\cdot )=u_0 &\mbox{in}\; \Omega ,
\\
u=g &\mbox{on}\; \Sigma =(0,T)\times \partial \Omega ,
\end{array}
\right.
\end{equation}
where the electric potential $V$ is $1$-periodic with respect to the infinite variable $x_1$:
\begin{equation}\label{(1.2)}
V(\cdot  ,x_1+1,\cdot )=V(\cdot ,x_1,\cdot ),\;\; x_1\in\mathbb{R}.
\end{equation}
The main purpose of this paper is to prove stability in the recovery of $V$ from the ``boundary" operator
\bel{a1}
\Lambda_V:(u_0,g)\longrightarrow (\partial _\nu u_{| \Sigma}, u(T,\cdot)),
\ee
where the measure of $\partial_\nu u_{| \Sigma}$ (resp. $u(T,\cdot)$) is performed on $\Sigma$ (resp. $\Omega$). Here $\nu(x)$, $x \in \partial \Omega$, denotes the outward unit normal to $\Omega$ and $\partial_\nu u (t,x)= \nabla u (t,x) \cdot \nu(x)$.

There are only a few results available in the mathematical literature on the identification of time-dependent coefficients appearing in an initial boundary-value problem. G. Eskin proved in \cite{Esk7} that time analytic coefficients of hyperbolic equations are uniquely determined from the knwoledge of partial Neumann data. The case of a bounded cylindrical domain was addressed in \cite{GK} where the time-dependent coefficient of order zero in a parabolic equation is stably determined from a single Neumann boundary data. In \cite{Cho}[\S 3.6.3], using optics geometric solutions, M. Choulli proved logarithmic stability in the recovery of zero order time-dependent coefficients from partial boundary measurements for parabolic equations. In \cite{CK} Lipschitz stability was derived in the same problem for coefficients depending only on time from a single measurement of the solution. 

All the above mentioned results were obtained in bounded domains. Several authors considered the problem of recovering time independent
coefficients in an unbounded domain from boundary measurements. In most of the cases the unbounded domain under consideration is either an half space or an infinite slab. In \cite{Ra} Rakesh studied
the problem of recovering a scalar potential of the wave equation in an half
space from the Neumann-to-Dirichlet map. Then, by combining a unique continuation theorem for
the Cauchy problem associated to wave equation with constant speed with the
X-ray transform method developed by Hamaker, Smith, Solmon, Wagner in \cite{HSSW}, he
derived uniqueness for electric potentials which are constant outside some
{\it a priori} fixed compact set. This result has been extended by Nakamura to a more general class of coefficients in \cite{Na}. 
The inverse problem of identifying an embedded object in an infinite slab was examined in \cite{Ik} and \cite{SW}, 
and in \cite{LU} the compactly supported coefficients of a stationary
Schr\"odinger equation are identified from the knowledge of partial Dirichlet-to-Neumann (abbreviated to DN in the following) data. In an infinite cylindrical waveguide \cite{CS} and \cite{K} established
a stability estimate with respect to the DN map for an unknown coefficient 
in absence of any assumption on its behavior outside a compact set.
For inverse problems with time-independent coefficients in unbounded domains we refer to \cite{DKLS}.

In \cite{Esk8}, Eskin proved uniqueness modulo gauge invariance in the inverse problem of determining the time-dependent electric and magnetic potentials from the DN map for the Schr\"odinger equation in a simply-connected bounded or unbounded domain.
More specifically the inverse problem of determining periodic coefficients in the Helmholz equation was recently examined in \cite{Fl}.

\subsection{Main results and outline}
In order to express the main result of this article we first define the trace operator $\tau _0$ by
$$
\tau _0w = \left( w_{| \Sigma },w(0,\cdot )\right),\ w\in C_0^\infty ([0,T]\times \R,C^\infty(\overline{\omega})),
$$
and extend it to a bounded operator from $H^2(0,T;H^2(\Omega ))$ into $L^2((0,T)\times \mathbb{R},H^{3/2}(\partial \omega ))\times L^2(\Omega )$. 
The space $Y_0=H^2(0,T;H^2(\Omega ))/\mbox{Ker}(\tau _0)$, equipped with its natural quotient norm, is Hilbertian according to \cite{Sc}[\S XXIII.4.2, Theorem 2].
Moreover the mapping $\widetilde{\tau}_0 : Y_0 \ni \dot{W} \mapsto \tau _0 W \in \tau _0(H^2(0,T;H^2(\Omega )))$, where $W$ is arbitrary in $\dot{W}$, being bijective, it turns out that
$X_0=\tau _0(H^2(0,T;H^2(\Omega )))$ is an Hilbert space for the norm
$$
\|w\|_{X_0}=\|\widetilde{\tau}_0^{-1}w\|_{Y_0},\ w\in X_0.
$$
Furthermore we have
$$
\|w\|_{X_0}=\inf\{ \|W\|_{H^2(0,T;H^2(\Omega ))};\; W\in H^2(0,T;H^2(\Omega ))\; \mbox{such that}\; \tau_0W=w\}.
$$
As will be seen in the coming section, the linear operator $\Lambda _V$ defined by \eqref{a1} is actually bounded from $X_0$ 
to $X_1=L^2(\Sigma)\times L^2(\Omega )$.
Last, putting
\[
\Omega '=(0,1)\times \omega,\ Q'=(0,T)\times \Omega',\ \Sigma_*'=(0,T)\times (0,1)\times \partial \omega,
\]
we may now state the main result of this paper.

\begin{theorem}
\label{thm1} 
For $M>0$ fixed, let $V_1,\ V_2 \in W^{2,\infty}(0,T;W^{2,\infty}(\Omega))$ fulfill \eqref{(1.2)} together with the 
three following conditions:
\bel{(1.3)}
(V_2-V_1)(T,\cdot )=(V_2-V_1)(0,\cdot )=0\ \mbox{in}\  \Omega',
\ee
\bel{(1.4)}
V_2-V_1=0\ \mbox{in}\ \Sigma_*',
\ee
\bel{(1.5)}
\| V_j\|_{W^{2,\infty}(0,T;W^{2,\infty}(\Omega'))} \leq M,\ j=1,2.
\ee
Then there are two constants $C>0$ and $\gamma ^\ast>0$, depending only on $T$, $\omega$ and $M$, such that the estimate
$$
\| V_2-V_1 \|_{L^2(Q')}\leq C \left( \ln \left(\frac{1}{ \| \Lambda_{V_2}-\Lambda_{V_1} \|_{{\B(X_0,X_1)}}}\right)\right)^{-\frac{2}{5}},
$$
holds whenever $0 < \| \Lambda_{V_2}-\Lambda_{V_1} \|_{\B(X_0,X_1)} < \gamma ^\ast$.
\end{theorem}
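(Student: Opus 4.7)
The plan is to combine a Floquet--Bloch decomposition in the infinite variable $x_1$ with a geometric optics construction on the bounded cell $\Omega'$, and then to balance exponential growth against polynomial decay in the familiar way to produce a logarithmic rate. First, using the $1$-periodicity \eqref{(1.2)} of the $V_j$ in $x_1$, I would apply the Floquet--Gelfand transform in $x_1$ to decompose \eqref{(1.1)} as a direct integral over the quasi-momentum $\theta\in[0,2\pi)$ of IBVPs on $Q'$ subject to the $\theta$-quasi-periodic matching conditions at $\{0\}\times\omega$ and $\{1\}\times\omega$. The operator $\Lambda_V$ splits accordingly into a fibered family $\{\Lambda_V^\theta\}$, and Parseval provides
\[
\sup_\theta \|\Lambda_{V_2}^\theta - \Lambda_{V_1}^\theta\| \le C\, \|\Lambda_{V_2} - \Lambda_{V_1}\|_{\B(X_0, X_1)}.
\]
It therefore suffices to estimate each Floquet--Fourier coefficient of $V_2-V_1$, viewed on $Q'$ (the vanishing assumptions \eqref{(1.3)}--\eqref{(1.4)} allowing one to extend it by periodicity in $x_1$ and in $t$), from the family of fibered operators.

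Next, for each $\theta$ and each large parameter $\xi=(\xi_1,\xi')\in\mathbb{R}^3$ with $\xi_1\in\theta+2\pi\mathbb{Z}$, I would construct geometric optics solutions of the form
\[
u_j(t,x) = e^{i(\xi\cdot x - |\xi|^2 t)}\bigl(a(t,x;\xi) + r_j(t,x;\xi)\bigr),\qquad j=1,2,
\]
to $(-i\partial_t-\Delta+V_j)u_j=0$ on $Q'$ with $\theta$-quasi-periodic matching at $x_1=0,1$. The phase solves the eikonal equation for the free Schr\"odinger operator, the amplitude $a$ is chosen so as to absorb the $V_j$-contribution via a transport equation, and the remainder is shown, by a Carleman/energy estimate uniform in $\theta$, to satisfy $\|r_j\|_{L^2(Q')}=O(|\xi|^{-1})$. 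Testing $(V_2-V_1)u_1$ against the complex conjugate of a GO solution $u_2$ for $\overline{V_2}$ and integrating by parts on $Q'$ — the vanishing conditions \eqref{(1.3)}--\eqref{(1.4)} killing the corresponding boundary contributions — yields an Alessandrini-type identity
\[
\int_{Q'}(V_2-V_1)\, u_1 \overline{u_2}\,dt\,dx = \bigl\langle (\Lambda_{V_2}^\theta - \Lambda_{V_1}^\theta)\tau_0 u_1,\,\psi\bigr\rangle
\]
for a suitable dual test $\psi$. Varying $\xi$ and extracting the principal part of $u_1\overline{u_2}$ from the GO expansion then controls every Floquet--Fourier coefficient of $V_2-V_1$ in the frequency ball of radius $R$ by a quantity of the form $C\bigl(\|\Lambda_{V_2}-\Lambda_{V_1}\|\,e^{C'R} + R^{-1}\bigr)$.

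Splitting $\|V_2-V_1\|_{L^2(Q')}^2$ into Fourier--Floquet contributions below and above frequency $R$, bounding the high-frequency tail through the $W^{2,\infty}$ a priori bound \eqref{(1.5)} and a Sobolev interpolation, and then optimizing $R$ against $\|\Lambda_{V_2}-\Lambda_{V_1}\|$ produces the announced logarithmic estimate; the exponent $2/5$ reflects the specific trade-off between the exponential growth $e^{C'R}$, the algebraic $R^{-1}$ remainder, and the polynomial decay dictated by the two orders of space--time Sobolev regularity carried by $V_2-V_1$ on the four-dimensional cell $Q'$.

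The main obstacle I anticipate is the GO construction itself: one has to produce remainders with estimates \emph{uniform in} $\theta$ and compatible \emph{simultaneously} with the $\theta$-quasi-periodic matching at $x_1=0,1$ and with the lateral Dirichlet data on $(0,T)\times(0,1)\times\partial\omega$. This typically requires a carefully tailored Carleman inequality for the Schr\"odinger operator on $Q'$ whose weight respects both boundary components and into which $\theta$ enters only as a shift in $\xi_1$. A secondary subtlety is that the GO solutions are not a priori in the quotient space $X_0$; one must therefore approximate them by admissible data and pass to the limit, exploiting the boundedness of $\Lambda_V$ from $X_0$ to $X_1$ mentioned above.
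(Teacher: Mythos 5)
Your high-level architecture matches the paper: Floquet--Bloch--Gel'fand decomposition of $\Lambda_V$ into a fibered family $\{\Lambda_{V,\theta}\}$ with $\|\Lambda_V\| = \sup_\theta\|\Lambda_{V,\theta}\|$, geometric optics solutions on the cell $Q'$, an Alessandrini-type identity, and a low/high frequency split optimized against the exponential factor. The reduction to Theorem~\ref{thm2} is exactly the paper's route.

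Where you diverge — and where your proposal has a genuine gap — is the construction of the GO solutions compatible with the $\theta$-quasi-periodic matching at $x_1=0,1$. You propose a WKB-type ansatz with an amplitude solving a transport equation and a remainder controlled by a Carleman or energy estimate uniform in $\theta$, and you yourself flag this as the main obstacle: one would need a Carleman weight simultaneously compatible with the lateral Dirichlet boundary $(0,T)\times(0,1)\times\partial\omega$ and the quasi-periodic faces $\{0,1\}\times\omega$, uniformly in $\theta$. That difficulty is real and is not resolved in your sketch. The paper avoids it entirely: following H\"ahner's periodic Faddeev-type operator, it constructs the remainder by expanding in the discrete Fourier basis adapted to the quasi-periodic lattice $\Z_\theta=\theta\E_1+\frac{\pi}{2R}\E_2+(\frac{\pi}{R}\Z)\times\Z\times(\frac{\pi}{R}\Z)^2$; the shift by $\frac{\pi}{2R}$ forces the imaginary part of the symbol $\alpha_0+\sum\alpha_j^2-2\kappa\cdot\alpha+2is\alpha_2$ to be bounded below by $\frac{s\pi}{R}$, yielding the $O(1/r)$ bound on the remainder directly and with $\theta$-uniformity built in. The solutions are then of the form $u_{k,\theta}=(e^{i\theta x_1}+w_{k,\theta})\,e^{-i((\xi\cdot\xi+4\pi^2 k^2)t+2\pi k x_1+x'\cdot\xi)}$: the complex frequency $\xi$ lives only in the transverse variables $x'\in\omega$, the longitudinal frequency is the integer multiple $2\pi k$, and the quasi-momentum $\theta$ enters solely through the modulating factor $e^{i\theta x_1}$ — there is no transport equation for an amplitude and no Carleman estimate. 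Your secondary worry, that GO solutions must be approximated to lie in the admissible data space, also does not arise in the paper's construction, since $w_{k,\theta}\in H^2(0,T;H^2_{\sharp,\theta}(\Omega'))$ by design, so $\tau_0'u_{k,\theta}$ lands directly in $\mathscr{X}'_{0,\theta}$. In short, the missing idea is the periodic Faddeev operator (Lemma~\ref{l2}): without it, or a worked-out substitute, the Carleman route you sketch does not close the argument.
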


Theorem \ref{thm1} follows from an auxiliary result we shall make precise below, which is related to the
following IBVP with quasi-periodic boundary conditions, 
 \begin{equation}
\label{(1.6)}
\left\{
\begin{array}{ll}
(-i\partial _t -\Delta  +V)v=0 &\mbox{in}\ Q',
\\
v(0,\cdot )=v_0 &\mbox{in}\ \Omega ',
\\
v=h &\mbox{on}\ \Sigma_*',
\\
v(\cdot ,1,\cdot )=e^{i\theta} v(\cdot ,0,\cdot) & \mathrm{on}\ (0,T) \times \omega,
\\
\partial_{x_1} v(\cdot ,1,\cdot )=e^{i\theta} \partial_{x_1} v(\cdot ,0,\cdot) & \mathrm{on}\ (0,T) \times \omega,
\end{array}
\right.
\end{equation}
where $\theta$ is arbitrarily fixed in $ [0,2 \pi)$. To this purpose we introduce the \footnote{The full definition of $H_{\sharp,\theta}^2(\Omega')$ can be found in section \ref{sec-FBG}.}{set}
$$H_{\sharp,\theta}^2(\Omega') =\{ u \in H^2(\Omega');\ u(1,\cdot)=e^{i \theta} u(0,\cdot)\ \mbox{and}\ \partial_{x_1} u(1,\cdot)=e^{i \theta} \partial_{x_1} u(0,\cdot)\ \mbox{on}\  \partial \omega \}, $$
and note $\tau_0'$ the linear bounded operator from $H^2(0,T;H^2(\Omega'))$ into $L^2((0,T)\times (0,1),H^{3/2}(\partial \omega ))\times L^2(\Omega')$, such that
$$
\tau'_0 w = \left(w_{| \Sigma_*'},w(0,\cdot)\right)\ \mathrm{for}\ w\in C_0^\infty ((0,T)\times (0,1),C^\infty(\overline{\omega})).
$$
Then the space $\mathscr{X}_{0,\theta}'=\tau_0'(H^2(0,T;H_{\sharp,\theta}^2(\Omega')))$, endowed with the norm
$$ \| w \|_{\mathscr{X}_{0,\theta}'} = \inf \{ \| W \|_{H^2(0,T;H^2(\Omega'))};\ W \in H^2(0,T;H_{\sharp,\theta}^2(\Omega'))\ \mathrm{satisfies}\ \tau_0' W = w \}, $$
is Hilbertian, and it is shown in section \ref{sec-rbo} that the operator
\[
\Lambda _{V,\theta }:(v_0,h)\in \mathscr{X}'_0 \mapsto (\partial _\nu v_\theta , v_\theta (T,\cdot )) \in \mathscr{X}_1'=L^2((0,T)\times (0,1) \times \partial \omega )\times L^2(\Omega'),
\]
is bounded. Here $v_\theta$ denotes the $L^2(0,T;H_{\sharp,\theta}^2(\Omega')) \cap H^1(0,T;L^2(\Omega'))$-solution of \eqref{(1.6)} associated to $(v_0,h)$. The following result essentially claims that Theorem \ref{thm1} remains valid upon substituting $\mathscr{X}'_{0,\theta}$ (resp.$\mathscr{X}'_1$) for $X_0$ (resp. $X_1$).

\begin{theorem}
\label{thm2} 
Let $M$ and  $V_j$, $j=1,2$, be the same as in Theorem \ref{thm1}. Then we may find two
constants $C>0$ and $\gamma ^\ast>0$, depending on $T$, $\omega$ and $M$, such that we have
\begin{equation}\label{(1.10)}
\| V_2-V_1 \|_{L^2(Q')}\leq C \left( \ln \left(\frac{1}{ \| \Lambda_{V_2,\theta}-\Lambda_{V_1,\theta} \|_{{\B(\mathscr{X}'_0,\mathscr{X}'_1)}}}\right)\right)^{-\frac{2}{5}},
\end{equation}
for any $\theta \in [0,2 \pi)$, provided
\begin{equation}\label{(1.11)}
0 < \| \Lambda_{V_2,\theta}-\Lambda_{V_1,\theta} \|_{\B(\mathscr{X}_{0,\theta}',\mathscr{X}_1'))}
< \gamma ^\ast .
\end{equation}
\end{theorem}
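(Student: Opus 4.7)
The plan is to prove Theorem \ref{thm2} by the geometric optics (GO) method adapted to the quasi-periodic cell $\Omega'$. The strategy is to test the difference $\Lambda_{V_2,\theta} - \Lambda_{V_1,\theta}$ against pairs of carefully chosen solutions in order to extract a quantitative control of the partial Fourier modes of $V := V_2 - V_1$, then balance this control against the a priori Sobolev bound \eqref{(1.5)} to yield a logarithmic rate.

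The first step is to build, for a large parameter $\tau > 0$ and a frequency vector $\xi \in \mathbb{R}^3$, a GO solution $u_1 \in L^2(0,T; H^2_{\sharp,\theta}(\Omega')) \cap H^1(0,T; L^2(\Omega'))$ of the Schr\"odinger equation with potential $V_1$ on $Q'$ of the form
\[
u_1(t,x) = e^{i(\tau t + \xi \cdot x)} a_1(t,x) + r_1(t,x),
\]
with an amplitude $a_1$ obtained by solving the associated transport equation and a remainder $r_1$ controlled by $\|r_1\|_{L^2(Q')} \leq C \tau^{-1}$, together with an analogous GO solution $u_2$ of the adjoint (backward) Schr\"odinger equation with potential $V_2$, with a suitable zero condition at $t=T$. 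The key constraint is that $u_j$ must respect the $\theta$-quasi-periodic boundary conditions of \eqref{(1.6)} in $x_1$.

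The second step is to derive the orthogonality identity. Taking $(v_0,h) = \tau_0' u_1$ and denoting by $v$ the solution of \eqref{(1.6)} with potential $V_2$ and this data, the difference $w = v - u_1$ solves the inhomogeneous Schr\"odinger equation with source $(V_1 - V_2) u_1$ and vanishing Cauchy/lateral data. Multiplying by $\overline{u_2}$ and integrating by parts over $Q'$ (the quasi-periodic boundary contributions cancel since both $u_j$ satisfy the same $\theta$-quasi-periodicity) yields
\[
\int_{Q'} (V_2-V_1)\, u_1\, \overline{u_2}\, dx\, dt = \langle (\Lambda_{V_2,\theta} - \Lambda_{V_1,\theta})(v_0,h),\, (\partial_\nu u_2, u_2(T,\cdot))\rangle_{\mathscr{X}_1'}.
\]
The left-hand side is $\widehat{V}(\xi) + O(\tau^{-1})$, where $V$ is extended by zero outside $\Omega'$ (this extension is legitimate thanks to the matching conditions \eqref{(1.3)}-\eqref{(1.4)}). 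Controlling the norms of the GO data in $\mathscr{X}_{0,\theta}'$ by a polynomial in $\tau$ and $|\xi|$, one arrives at an estimate of the shape
\[
|\widehat{V}(\xi)|^2 \leq C\bigl( e^{c(\tau + |\xi|)}\, \|\Lambda_{V_2,\theta} - \Lambda_{V_1,\theta}\|_{\B(\mathscr{X}_{0,\theta}',\mathscr{X}_1')}^2 + \tau^{-2}\bigr),\quad |\xi| \leq R.
\]

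The third step is the standard low/high frequency splitting
\[
\|V\|_{L^2(Q')}^2 = \int_{|\xi|\leq R}|\widehat{V}(\xi)|^2 d\xi + \int_{|\xi|>R}|\widehat{V}(\xi)|^2 d\xi,
\]
where the tail is estimated by $C(M) R^{-4}$ via \eqref{(1.5)} and the Sobolev embedding. Inserting the previous bound in the low-frequency integral, optimizing successively in $\tau$ and $R$ as fractional powers of $\ln(1/\|\Lambda_{V_2,\theta} - \Lambda_{V_1,\theta}\|)$ produces the exponent $-2/5$ in \eqref{(1.10)}, provided \eqref{(1.11)} holds with $\gamma^\ast$ small enough.

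The main obstacle lies in the first step: constructing GO solutions that are genuinely $\theta$-quasi-periodic in $x_1$ and have a remainder controlled uniformly in $\tau$. A natural workaround is to choose the Fourier variable $\xi = (\xi_1,\xi')$ with $\xi_1 \in 2\pi\mathbb{Z} + \theta$, so that the ansatz $e^{i\xi\cdot x}$ is automatically $\theta$-quasi-periodic in $x_1$; one then solves the transport equation for $a_1$ and the remainder equation for $r_1$ in the class $H^2_{\sharp,\theta}$ by expanding in the $x_1$-Fourier basis, reducing to standard Carleman-type estimates on each mode. A secondary difficulty is that the frequency grid becomes $\theta$-dependent, so one must verify (via a rescaling of $\xi_1$) that the constants in the logarithmic estimate are uniform in $\theta \in [0,2\pi)$.
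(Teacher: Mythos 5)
Your macro-structure matches the paper's (geometric-optics solutions for the quasi-periodic cell problem, an orthogonality identity expressing $\int_{Q'} (V_2-V_1)\,u_1 \overline{u_2}$ through $\Lambda_{V_2,\theta}-\Lambda_{V_1,\theta}$, a low/high frequency splitting of $\|V_2-V_1\|_{L^2(Q')}^2$, and an optimization of the large parameter against $\ln(1/\gamma)$). However, the crucial GO construction is wrong in a way that would prevent the argument from closing. Your ansatz $u_1 = e^{i(\tau t + \xi\cdot x)} a_1 + r_1$ with $\tau > 0$ large and $\xi \in \mathbb{R}^3$ real cannot satisfy the Schr\"odinger dispersion relation: inserting it into $(-i\partial_t - \Delta + V)$ produces the leading-order term $(\tau + |\xi|^2)a_1$, which for $\tau>0$ and $\xi$ real forces $a_1 \equiv 0$; there is no meaningful transport equation here. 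The paper works with a Faddeev/CGO-type solution $u_{k,\theta}(t,x)=(e^{i\theta x_1}+w_{k,\theta})\,e^{-i((\xi\cdot\xi+4\pi^2 k^2)t+2\pi k x_1+x'\cdot\xi)}$ where $\xi\in\mathbb{C}^2$ is genuinely complex, the time frequency is slaved to $\xi$ via $\tau = \xi\cdot\xi$ (real since $\Re\xi\cdot\Im\xi = 0$), the $x_1$-frequency is a discrete $2\pi k$, and the large parameter that controls the remainder (via a periodic Faddeev operator à la H\"ahner, Lemma \ref{l2}/\ref{l3}) is $r=|\Im\xi|$, not a separate time parameter.

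This error propagates: you simultaneously claim the $\mathscr{X}'_{0,\theta}$-norm of the data is polynomial in $\tau,|\xi|$ and that the low-frequency bound carries an exponential $e^{c(\tau+|\xi|)}$; these cannot both hold, and if the data were really polynomial you would end up with a power-law (not logarithmic) rate, contradicting the targeted exponent $-2/5$. In the paper the exponential $e^{|\omega|r}$ comes precisely from the imaginary part $\Im\xi$ inflating the modulus of $e^{-ix'\cdot\xi}$ over $\omega$, and it is the balance of that exponential against the $1/r^2$ remainder decay that produces the logarithm. Two further, lesser issues: (i) your remainder bound $\|r_1\|_{L^2(Q')}\le C\tau^{-1}$ is too weak --- the lateral/Cauchy data must live in $\mathscr{X}'_{0,\theta}$, so you need control of the remainder in $H^2(0,T;H^2_{\sharp,\theta}(\Omega'))$, which the paper obtains through the Faddeev operator and an extension $P$ together with a cutoff (Lemma \ref{l3}); and (ii) the phrase ``reduce to standard Carleman-type estimates on each $x_1$-Fourier mode'' is not a construction --- the paper instead proves a concrete resolvent-type bound on a torus (Lemma \ref{l2}), with a denominator bounded below by $s\pi/R$ thanks to the choice $\kappa\cdot\E_2 = 0$, and then pulls this back to $\Omega'$ by rotating the $x'$ variables and cutting off.
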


The text is organized as follows. In section \ref{sec-bo} we define the boundary operator $\Lambda_V$ and prove that it is bounded. In section \ref{sec-dec} we use the Floquet-Bloch-Gel'fand transform to decompose the IBVP \eqref{(1.1)} into a collection of problems \eqref{(1.6)} with quasi-periodic boundary conditions. section \ref{sec-ogs} is devoted to building optics geometric solutions to the above mentioned quasi-periodic boundary value problems. Finally the proof of Theorems \ref{thm1}-\ref{thm2}, which is by means of suitable optics geometric solutions defined in section \ref{sec-ogs}, is detailled in section \ref{sec-si}.

\begin{remark}
The method of the proofs of Theorems \ref{thm1} and \ref{thm2} given in the remaining part of this text can be easily adapted to the case of the inverse elliptic problem of recovering the (time-independent) periodic scalar potential $V$ in the stationnary Schr\"odinger equation 
$$ \left\{
\begin{array}{ll}
(-\Delta +V(x))u=0\quad &\mbox{in}\ \Omega,
\\
u=g &\mbox{on}\  \partial \Omega,
\end{array}
\right.
$$
from the knowledge of the DN map $g \mapsto \partial_\nu u$. Nevertheless, in order to prevent the inadequate expense of the size of this paper, we shall not extend the technique developped in the following sections to this peculiar framework.
\end{remark}


\section{Boundary operator}
\label{sec-bo}
In this section we prove that the boundary operator $\Lambda_V$ is bounded from $X_0$ into $X_1$. This preliminarily requires that the existence, uniqueness and smoothness properties of the solution to the IBVP \eqref{(1.2)} be appropriately established in Corollary \ref{cor-exiuni}.
To this end we start by proving the following technical lemma.
\begin{lemma}
\label{lemma2.1}
Let $X$ be a Banach space, $M_0$ be a m-dissipative operator in $X$ with dense domain $D(M_0)$
and $B \in C ([0,T] ,\B(D(M_0)))$. Then for all $v_0 \in D(M_0)$ and
$f\in C([0,T],X)\cap L^1(0,T;D(M_0))$ $($resp. $f\in W^{1,1}(0,T;X))$ there is a unique solution
$v \in Z_0=C([0,T], D(M_0))\cap C^1([0,T],X)$ to the following Cauchy problem
\begin{eqnarray}\label{(2.1)}
\left\{
\begin{array}{ll}
v'(t)=M_0 v(t)+ B(t)v(t)+f(t),
\\
v(0)=v_0,
\end{array}
\right.
\end{eqnarray}
such that
\begin{equation}\label{(2.2)}
\|v\|_{Z_0} =\|v\|_{C^0([0,T], D(M_0))}+\|v\|_{C^1([0,T],X)} \leq C(\|v_0\|_{D(M_0)}+\|f\|_\ast).
\end{equation}
Here $C$ is some positive constant depending only on $T$ and $\|B\|_{C ([0,T] ,\B(D(M_0)))}$, and 
$\|f\|_\ast$ stands for the norm $\| f \|_{C([0,T],X)\cap L^1(0,T;D(M_0))}$ $($resp. $\|f\|_{W^{1,1}(0,T;X)})$.
\end{lemma}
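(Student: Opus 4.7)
The strategy is twofold: construct a mild solution via a Banach fixed point, then upgrade it to a classical solution using standard semigroup regularity theory. By the Hille--Yosida--Lumer--Phillips theorem, the m-dissipativity of $M_0$ together with the density of $D(M_0)$ ensures that $M_0$ generates a $C_0$-semigroup of contractions $(S(t))_{t \ge 0}$ on $X$. Since $S(t)$ commutes with $M_0$ on $D(M_0)$, the bound
$$\|S(t)x\|_{D(M_0)} = \|S(t)x\|_X + \|S(t)M_0 x\|_X \le \|x\|_{D(M_0)}$$
shows that $S$ restricts to a $C_0$-semigroup of contractions on $D(M_0)$ equipped with the graph norm. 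Classical autonomous regularity theory then guarantees that, under either hypothesis on $f$, the convolution $t \mapsto \int_0^t S(t-s)f(s)\, ds$ takes values in $D(M_0)$ and is continuous with respect to the graph norm.

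Next I would rewrite \eqref{(2.1)} as the Duhamel integral equation
$$v(t) = S(t) v_0 + \int_0^t S(t-s)\bigl(B(s) v(s) + f(s)\bigr)\, ds,$$
and look for a fixed point of the associated map $\Phi$ in $C([0,T], D(M_0))$. For $w \in C([0,T], D(M_0))$, the term $B(s)w(s)$ remains in $C([0,T], D(M_0))$ because $B \in C([0,T], \B(D(M_0)))$, so $\Phi$ is well defined. Equipping $C([0,T], D(M_0))$ with the weighted norm $\|w\|_\lambda := \sup_{t \in [0,T]} e^{-\lambda t}\|w(t)\|_{D(M_0)}$ for $\lambda$ sufficiently large (depending only on $\|B\|_{C([0,T], \B(D(M_0)))}$), the contractivity of $S$ on $D(M_0)$ makes $\Phi$ a strict contraction, and the Banach fixed point theorem yields a unique mild solution $v \in C([0,T], D(M_0))$.

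It then remains to upgrade $v$ to a classical solution in $Z_0$ and to derive \eqref{(2.2)}. Setting $g(s) := B(s)v(s) + f(s)$, the problem reduces to the unperturbed Cauchy problem $v' = M_0 v + g$, $v(0) = v_0$, to which standard regularity criteria apply. Case 1 is immediate: $g \in C([0,T], X) \cap L^1(0,T; D(M_0))$, so the mild solution is classical. Case 2 is the main obstacle and requires establishing that $Bv \in W^{1,1}(0,T; X)$. I would handle this by a bootstrap: use $f \in W^{1,1}(0,T; X)$ together with the Duhamel identity to prove $v$ is Lipschitz in $X$, then combine with the continuity of $s \mapsto B(s)$ in $\B(D(M_0))$ to conclude that $Bv \in W^{1,1}(0,T; X)$, whereupon the standard lifting result applies. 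The estimate \eqref{(2.2)} finally follows by taking $\|\cdot\|_{D(M_0)}$ in the Duhamel identity, applying Gronwall, and then reading the $X$-norm of $v'$ directly from the ODE.
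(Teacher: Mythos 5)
Your overall strategy coincides with the paper's: rewrite \eqref{(2.1)} as a Duhamel integral equation, obtain a fixed point in $C([0,T],D(M_0))$, then upgrade to a classical solution and read off \eqref{(2.2)} from Gronwall and the equation. The weighted-norm contraction you use is a cosmetic variant of the paper's argument that an iterate $\widetilde{K}^n$ becomes a strict contraction; both are standard and deliver the same fixed point.

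The gap is in your treatment of Case 2. You declare the ``main obstacle'' to be establishing $Bv\in W^{1,1}(0,T;X)$ and propose a bootstrap through ``$v$ Lipschitz in $X$''. That bootstrap does not close. Writing $B(t)v(t)-B(s)v(s)=B(t)\bigl(v(t)-v(s)\bigr)+\bigl(B(t)-B(s)\bigr)v(s)$, the first term cannot be controlled by $\|v(t)-v(s)\|_X$ because $B(t)\in\B(D(M_0))$ is not assumed to act boundedly on $X$; one would need Lipschitz control of $v$ in the \emph{graph} norm, which you have not established. The second term is only $o(1)$ in $|t-s|$, since $t\mapsto B(t)$ is merely continuous, not Lipschitz, in $\B(D(M_0))$. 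More to the point, the obstacle is illusory. Once the fixed point $v$ lies in $C([0,T],D(M_0))$, the perturbation $Bv$ lies in $C([0,T],D(M_0))\subset C([0,T],X)\cap L^1(0,T;D(M_0))$, which is \emph{already} one of the two admissible source classes for the linear regularity result. By linearity one decomposes the mild solution as $S(\cdot)v_0+\int_0^{\cdot}S(\cdot-s)B(s)v(s)\,ds+\int_0^{\cdot}S(\cdot-s)f(s)\,ds$ and applies the regularity result (Cazenave--Haraux, Proposition~4.1.6) to each summand separately: the middle one via the $C\cap L^1(D(M_0))$ criterion for $Bv$, the last one via whichever criterion $f$ satisfies. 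This is what the paper does, and it disposes of both cases at once with no $W^{1,1}$ bootstrap at all.
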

\begin{proof}
Put $Y=C([0,T], D(M_0))$ and define
$$
\begin{array}{lccc}
K:&  Y&\rightarrow  & Y
\\
& v&\mapsto &  \left[t \mapsto (Kv)(t)=\int_0^t S(t-s)B(s)v(s)ds\right],
\end{array}
$$
where $S(t)$ denotes the contraction semi-group generated by $M_0$.
The operator $K$ is well defined from \cite{CH}[Proposition 4.1.6] and we have
\begin{equation}\label{(2.3)}
\|Kv(t)\|_X\leq  tM \|v\|_Y,\ t\in [0,T],\ M=\|B\|_{C ([0,T] ,\B(D(M_0)))}.
\end{equation}
Therefore $K\in \B(Y)$ and we get
\begin{equation}\label{(2.4)}
\| K^n v(t)\|_X \leq \frac{t^nM^n}{n!} \|v\|_Y,\ t\in [0,T],
\end{equation}
by iterating \eqref{(2.3)}. Fix $F\in Y$ and put $\widetilde{K}v=Kv+F$ for all $v\in Y$. Thus, since
\[
\widetilde{K}^n v -\widetilde{K}^n w=K^n(v-w),\ v,w\in Y,\ n \in \mathbb{N},
\]
\eqref{(2.4)} entails that $\widetilde{K}^n$ is strictly contractive for some $n \in \mathbb{N}^*$. Hence $\widetilde{K}$ admits a unique fixed point in $Y$, which is the unique solution $v \in Y$ to the following Volterra integral equation 
\begin{equation}
\label{(2.5)}
v(t)=\int_0^tS(t-s)B(s)v(s)dt+F(t),\ t\in [0,T].
\end{equation}
As a consequence we have
\begin{equation}\label{(2.6)}
\|v\|_Y\leq e^{MT}\|F\|_Y,
\end{equation}
by Gronwall lemma.

The last step of the proof is to choose $F(t)=S(t)v_0+\int_0^tS(t-s)f(s)ds$ for $t\in [0,T]$ and to apply \cite{CH}[Proposition 4.1.6] twice, so we find out that $F\in Y$. Therefore the function $v$ given by \eqref{(2.5)} belongs to $C^1([0,T],X)$ and it is the unique solution to \eqref{(2.1)}. Finally we complete the proof by noticing that \eqref{(2.2)} follows readily from \eqref{(2.6)}.
\end{proof}

Prior to solving the IBVP \eqref{(1.1)} with the aid of Lemma \ref{lemma2.1}, we define the Dirichlet Laplacian $A_0=-\Delta^D$ in $L^2(\Omega)$ as the selfadjoint operator generated in $L^2(\Omega)$ by the closed quadratic form 
$$a_0(u)=\int_{\Omega} |\nabla u(x)|^2 dx,\ u \in D(a_0)=H_0^1(\Omega),$$
and establish the coming:

\begin{lemma}
\label{lemma2.2}
The domain of the operator $A_0$ is $H_0^1(\Omega) \cap H^2(\Omega)$ and the norm associated to $D(A_0)$ is equivalent to the usual one in $H^2(\Omega)$.
\end{lemma}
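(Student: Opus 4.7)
The plan is to establish the two inclusions $H_0^1(\Omega) \cap H^2(\Omega) \subset D(A_0)$ and $D(A_0) \subset H^2(\Omega)$ separately and to deduce the announced norm equivalence from the associated estimates. The first inclusion is immediate: for $u \in H_0^1(\Omega) \cap H^2(\Omega)$ and any $v \in H_0^1(\Omega)$, Green's formula (justified by the density of $C_c^\infty(\Omega)$ in $H_0^1(\Omega)$ and the vanishing trace of $v$) gives $a_0(u,v) = -\int_\Omega (\Delta u)\overline{v}\,dx$, so $u \in D(A_0)$ with $A_0 u = -\Delta u$ and $\|u\|_{L^2(\Omega)}+\|A_0 u\|_{L^2(\Omega)} \leq C\|u\|_{H^2(\Omega)}$. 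The substantive work is the reverse inclusion. Given $u \in D(A_0)$, I set $f := A_0 u \in L^2(\Omega)$, so that $-\Delta u = f$ in the distributional sense while $u \in H_0^1(\Omega)$. I then exploit the translation invariance of $\Omega = \R \times \omega$ in the variable $x_1$ by applying the partial Fourier transform $\F_{x_1}$: writing $\widehat u(\xi,x') := \F_{x_1} u$, $\widehat f(\xi,x') := \F_{x_1} f$, for almost every $\xi \in \R$ the slice $\widehat u(\xi,\cdot)$ belongs to $H_0^1(\omega)$ and solves the Dirichlet problem $(-\Delta_{x'} + \xi^2)\widehat u(\xi,\cdot) = \widehat f(\xi,\cdot)$ in the bounded $C^2$-domain $\omega$.

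Classical elliptic regularity on $\omega$ then provides $\widehat u(\xi,\cdot) \in H^2(\omega) \cap H_0^1(\omega)$ together with the bound $\|\widehat u(\xi,\cdot)\|_{H^2(\omega)} \leq C(\|\Delta_{x'}\widehat u(\xi,\cdot)\|_{L^2(\omega)} + \|\widehat u(\xi,\cdot)\|_{L^2(\omega)})$, whose constant depends only on $\omega$. I obtain uniform (in $\xi$) control by successively testing the fiber equation against $\widehat u(\xi,\cdot)$ (which, together with Poincar\'e on $\omega$, yields $\|\nabla_{x'}\widehat u\|_{L^2(\omega)}^2 + (1+\xi^2)\|\widehat u\|_{L^2(\omega)}^2 \leq C\|\widehat f\|_{L^2(\omega)}^2$), against $-\Delta_{x'}\widehat u(\xi,\cdot)$ (which yields $\|\Delta_{x'}\widehat u\|_{L^2(\omega)}^2 + \xi^2\|\nabla_{x'}\widehat u\|_{L^2(\omega)}^2 \leq C\|\widehat f\|_{L^2(\omega)}^2$), and by reading $\xi^2 \widehat u = \Delta_{x'}\widehat u + \widehat f$ directly off the equation to control $\xi^2 \|\widehat u\|_{L^2(\omega)}$. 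Combining these with the bounded-domain $H^2$-estimate I obtain
$$ (1+\xi^4)\|\widehat u(\xi,\cdot)\|_{L^2(\omega)}^2 + (1+\xi^2)\|\nabla_{x'}\widehat u(\xi,\cdot)\|_{L^2(\omega)}^2 + \|\widehat u(\xi,\cdot)\|_{H^2(\omega)}^2 \leq C\|\widehat f(\xi,\cdot)\|_{L^2(\omega)}^2, $$
uniformly in $\xi \in \R$. Integrating in $\xi$ and invoking Plancherel identifies each component of $\|u\|_{H^2(\Omega)}^2$ and delivers the estimate $\|u\|_{H^2(\Omega)} \leq C\|A_0 u\|_{L^2(\Omega)}$.

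Putting the two inclusions together yields $D(A_0) = H_0^1(\Omega) \cap H^2(\Omega)$, and the preceding bound, combined with its trivial converse, gives the equivalence of the graph norm of $A_0$ with the usual $H^2(\Omega)$-norm. The only delicate point that requires care is the rigorous justification of the Fourier fibration itself, namely that $\widehat u(\xi,\cdot) \in H_0^1(\omega)$ and satisfies the fibered Dirichlet problem for almost every $\xi$; once this is in place the proof reduces to applying standard bounded-domain elliptic regularity on the cross-section $\omega$, uniformly in the frequency parameter $\xi$.
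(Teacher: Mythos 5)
Your proof is correct and takes essentially the same route as the paper's: partial Fourier transform in $x_1$, reduction to the fibered Dirichlet operator $-\Delta_{x'}+\xi^2$ on the bounded $C^2$-cross-section $\omega$, elliptic regularity there, and Plancherel to reassemble. The paper phrases this via the direct-integral decomposition $\mathscr{F}A_0\mathscr{F}^{-1}=\int^{\oplus}\widehat{A}_{0,k}\,dk$ and the exact identity $\|(-\Delta_{x'}+k^2)v\|_{L^2(\omega)}^2=\|\Delta_{x'}v\|^2+2k^2\|\nabla_{x'}v\|^2+k^4\|v\|^2$ in place of your energy estimates obtained by testing the fiber equation against $\widehat u$ and $-\Delta_{x'}\widehat u$, but the substance is identical.
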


\begin{proof}
We have
\begin{equation}
\label{(2.7)}
\mathscr{F} A_0 \mathscr{F}^{-1} = \int_{\mathbb{R}}^{\oplus} \widehat{A}_{0,k} dk,
\end{equation}
where $\mathscr{F}$ denotes the partial Fourier with respect to $x_1$, i.e.
\[
(\mathscr{F} u)(k,x') = \widehat{u}(k,x')=\frac{1}{(2 \pi)^{1 \slash 2}} \int_{\mathbb{R}} e^{-i k x_1} u(x_1,x') dx_1,\ (k,x') \in \Omega, 
\]
and $\widehat{A}_{0,k}=-\Delta_{x'}+k^2$, $k \in \mathbb{R}$, is the selfadjoint operator in $L^2(\omega)$
generated by the closed quadratic form $\widehat{a}_{0,k}(v)=\int_{\omega} (|\nabla v(x')|^2 + k^2 |v(x')|^2 )dx'$, $v \in D(\widehat{a}_{0,k})=D(\widehat{a}_{0})=H_0^1(\omega)$. 
Since $\omega$ is a bounded domain with $C^2$-boundary, we have $D(\widehat{A}_{0,k})=H_0^1(\omega) \cap H^2(\omega)$ for each $k \in \mathbb{R}$, by \cite{Ag}.

Further, bearing in mind that \eqref{(2.7)} reads
\[
\left\{ \begin{array}{l}
D(A_0) = \{ u \in L^2(\Omega),\ \widehat{u}(k) \in D(\widehat{A}_{0,k})\ \textrm{a.e.}\ k \in \mathbb{R}\ \textrm{and}\ \int_{\mathbb{R}} \|\widehat{A}_{0,k} \widehat{u}(k) \|_{L^2(\omega)}^2 dk < \infty \},\\
(\mathscr{F} A_0 u)(k) = \widehat{A}_{0,k} \widehat{u}(k)\ \textrm{a.e.}\ k \in \mathbb{R}, \end{array} \right. 
\]
and noticing that
\[
 \|\widehat{A}_{0,k} v \|_{L^2(\omega)}^2= \sum_{j=0}^2 C_2^j k^{2j} \| \nabla^j v \|_{L^2(\omega)}^2,\ v \in H^1_0(\omega) \cap H^2(\omega),\ k \in \mathbb{R}, 
 \]
with $C_2^j=2! \slash (j!(2-j)!)$, $j=0,1,2$, we see that $D(A_0)$ is made of functions $u \in L^2(\Omega)$ satisfying simultaneously $\widehat{u}(k) \in H_0^1(\omega) \cap H^2(\omega)$ for a.e. $k \in \mathbb{R}$, and $k \mapsto (1+k^2)^{j \slash 2} \| \widehat{u}(k) \|_{H^{2-j}(\omega)} \in L^2(\mathbb{R})$ for $j=0,1,2$. Finally, $\| \widehat{u}(k) \|_{H^2(\omega)}$ being equivalent to $\| \Delta \widehat{u}(k) \|_{L^2(\omega)}$ by \cite{Ev}[\S 6.3, Theorem 4], we obtain the result.
\end{proof}

Let $B$ denote the multiplier by $V \in C([0,T],W^{2,\infty}(\Omega ))$. Due to Lemma \ref{lemma2.2} we have $B\in C([0,T],\B(D(A_0)))$ with $\|B\|_{C([0,T],\B(D(A_0)))}\leq \|V\|_{C([0,T],W^{2,\infty}(\Omega ))}$.
Therefore, applying Lemma \ref{lemma2.1} to $M_0=-iA_0$ we obtain the following existence and uniqueness result:

\begin{proposition}
\label{pr-exiuni}
Let $M>0$ and $V\in C([0,T],W^{2,\infty}(\Omega ))$ be such that $\|V\|_{C([0,T],W^{2,\infty}(\Omega ))} \leq M$. Then for all $v_0\in H_0^1(\Omega )\cap H^2(\Omega )$ and $f\in W^{1,1}(0,T;L^2(\Omega ))$ there is a unique solution $v \in Z_0=C([0,T],H_0^1(\Omega )\cap H^2(\Omega ))\cap C^1([0,T]; L^2(\Omega) )$ to 
\bel{b1}
\left\{
\begin{array}{ll}
-i \partial _t v -\Delta v +V v = f &\mbox{in}\ Q,
\\
v(0,\cdot )=v_0 &\mbox{in}\ \Omega ,
\\
v=0 &\mbox{on}\ \Sigma.
\end{array}
\right.
\ee
Moreover $v$ fulfills
$$
\|v\|_{Z_0}\leq C\left(\|v_0\|_{H^2(\Omega )}+\|f\|_{W^{1,1}(0,T; L^2(\Omega ))}\right),
$$
for some constant $C>0$ depending only on $\omega$, $T$ and $M$.
\end{proposition}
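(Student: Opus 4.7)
The plan is to apply Lemma \ref{lemma2.1} to the abstract Cauchy problem in $X=L^2(\Omega)$ with generator $M_0=-iA_0$, where $A_0=-\Delta^D$ is the Dirichlet Laplacian introduced before Lemma \ref{lemma2.2}. Rewriting \eqref{b1} as
\[
v'(t) = -iA_0 v(t) - i V(t,\cdot) v(t) + i f(t),\qquad v(0)=v_0,
\]
one recognizes an instance of \eqref{(2.1)} with $B(t)w = -iV(t,\cdot)w$ and source $if(t)$. I would first check the hypotheses of Lemma \ref{lemma2.1}: since $A_0$ is selfadjoint and non-negative on $L^2(\Omega)$, the operator $M_0=-iA_0$ is skew-adjoint, hence m-dissipative, and its domain $D(A_0)=H_0^1(\Omega)\cap H^2(\Omega)$ is dense. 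Replacing $f$ by $if$ does not affect the $W^{1,1}(0,T;L^2(\Omega))$ norm.

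The main step is to verify that $B\in C([0,T],\B(D(A_0)))$ with an operator norm bound controlled by $M$. Let $u\in D(A_0)$ and $t\in[0,T]$. By the Leibniz rule, $\Delta(V(t,\cdot)u) = V(t,\cdot)\Delta u + 2\nabla V(t,\cdot)\cdot\nabla u + (\Delta V(t,\cdot))u$, which yields
\[
\|V(t,\cdot)u\|_{H^2(\Omega)} \le C_\omega \|V(t,\cdot)\|_{W^{2,\infty}(\Omega)}\|u\|_{H^2(\Omega)},
\]
while $V(t,\cdot)u$ still vanishes on $\partial\Omega$ since $u$ does. Hence $B(t)$ maps $D(A_0)$ into itself. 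Using Lemma \ref{lemma2.2} to identify the $D(A_0)$-norm with the $H^2(\Omega)$-norm, one gets $\|B(t)\|_{\B(D(A_0))}\le C_\omega \|V(t,\cdot)\|_{W^{2,\infty}(\Omega)} \le C_\omega M$. Continuity in $t$ is inherited from the hypothesis $V\in C([0,T],W^{2,\infty}(\Omega))$ via the same inequality applied to $V(t,\cdot)-V(s,\cdot)$.

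All the assumptions of Lemma \ref{lemma2.1} being verified in the $W^{1,1}$ setting (with $v_0\in D(A_0)$ and $if\in W^{1,1}(0,T;L^2(\Omega))$), I obtain a unique $v\in C([0,T],D(A_0))\cap C^1([0,T],L^2(\Omega))$ solving $v'=-iA_0v-iVv+if$ with $v(0)=v_0$, which is precisely the required solution of \eqref{b1}. Moreover \eqref{(2.2)} gives
\[
\|v\|_{Z_0} \le C\bigl(\|v_0\|_{D(A_0)} + \|if\|_{W^{1,1}(0,T;L^2(\Omega))}\bigr),
\]
with $C$ depending only on $T$ and $\|B\|_{C([0,T],\B(D(A_0)))} \le C_\omega M$, i.e.\ only on $\omega$, $T$ and $M$. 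Invoking Lemma \ref{lemma2.2} once more to replace $\|v_0\|_{D(A_0)}$ by an equivalent $H^2(\Omega)$-norm yields the stated estimate. The only non-routine point is the product-rule argument ensuring $B\in C([0,T],\B(D(A_0)))$, and this is exactly where the hypothesis $V\in C([0,T],W^{2,\infty}(\Omega))$ (rather than something weaker) is used in a critical way.
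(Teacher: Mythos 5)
Your proof follows exactly the paper's own argument: apply Lemma \ref{lemma2.1} with $M_0=-iA_0$, use Lemma \ref{lemma2.2} to identify $D(A_0)=H_0^1(\Omega)\cap H^2(\Omega)$ with its $H^2$-norm, and check that multiplication by $V(t,\cdot)$ defines an element of $C([0,T],\B(D(A_0)))$ with norm controlled by $\|V\|_{C([0,T],W^{2,\infty}(\Omega))}$. The paper states this in one sentence immediately before the proposition; you merely supply the routine verifications (skew-adjointness of $-iA_0$, the Leibniz-rule bound, preservation of the Dirichlet condition), so the content and structure coincide.
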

Finally, using a classical extension argument we now derive the coming useful consequence to Proposition \ref{pr-exiuni}.
\begin{corollary}
\label{cor-exiuni}
Let $M$ and $V$ be the same as in Proposition \ref{pr-exiuni}. Then for every $(g,u_0)\in X_0$, the IBVP \eqref{(1.1)} admits a unique \footnote{The coming proof actually establishes that this solution belongs to $C([0,T],H^2(\Omega ))\cap C^1([0,T],L^2(\Omega ))$.}{solution} 
$$
\mathfrak{s}(g,u_0)\in Z=L^2(0,T;H^2(\Omega ))\cap H^1(0,T;L^2(\Omega )).
$$
Moreover we have
\begin{equation}
\label{(2.8)}
\| \mathfrak{s}(g,u_0) \|_{Z}\leq C \| (g,u_0) \|_{X_0},
\end{equation}
for some constant $C>0$ depending only on $\omega$, $T$ and $M$.
\end{corollary}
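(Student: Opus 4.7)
The plan is to exploit the definition of $X_0$ as the image $\tau_0(H^2(0,T;H^2(\Omega)))$ in order to reduce the IBVP \eqref{(1.1)} with inhomogeneous boundary and initial data to an inhomogeneous equation with zero Dirichlet and zero initial data, to which Proposition \ref{pr-exiuni} directly applies. Concretely, given $(g,u_0)\in X_0$, I would choose a representative $G\in H^2(0,T;H^2(\Omega))$ of the class satisfying $\tau_0 G=(g,u_0)$, with
$$
\|G\|_{H^2(0,T;H^2(\Omega))}\le 2\|(g,u_0)\|_{X_0},
$$
which is possible since $\|(g,u_0)\|_{X_0}$ is defined as the infimum of the $H^2(0,T;H^2(\Omega))$-norm over all such lifts.

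Next, I would set $v=u-G$ and source term
$$
f:=i\partial_t G+\Delta G-VG,
$$
so that $u$ solves \eqref{(1.1)} in $Z$ if and only if $v$ solves \eqref{b1} with right-hand side $f$ and zero initial datum $v_0=0\in H_0^1(\Omega)\cap H^2(\Omega)$. The routine but necessary check is that $f\in W^{1,1}(0,T;L^2(\Omega))$: the first two terms lie in $H^1(0,T;L^2(\Omega))\hookrightarrow W^{1,1}(0,T;L^2(\Omega))$ by the Sobolev embedding in time, while the multiplicative term $VG$ is controlled by the hypothesis $V\in C([0,T],W^{2,\infty}(\Omega))$ together with $G\in H^2(0,T;H^2(\Omega))$, yielding in particular
$$
\|f\|_{W^{1,1}(0,T;L^2(\Omega))}\le C(\omega,T,M)\,\|G\|_{H^2(0,T;H^2(\Omega))}.
$$
Proposition \ref{pr-exiuni} then delivers a unique $v\in Z_0\subset Z$ with $\|v\|_{Z_0}\le C(\|f\|_{W^{1,1}(0,T;L^2(\Omega))})$; hence $u:=v+G\in Z$ solves \eqref{(1.1)} and
$$
\|u\|_Z\le \|v\|_{Z_0}+\|G\|_{H^2(0,T;H^2(\Omega))}\le C\|(g,u_0)\|_{X_0},
$$
which is \eqref{(2.8)}.

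For uniqueness in $Z$, I would argue that if $u_1,u_2\in Z$ are two solutions associated to the same datum $(g,u_0)$, then $w=u_1-u_2$ belongs to $Z$, has zero trace on $\Sigma$, zero initial value, and satisfies $-i\partial_t w-\Delta w+Vw=0$ in $Q$. Pairing this equation against $w$ in $L^2(\Omega)$, taking imaginary parts and applying Gronwall's lemma gives $\|w(t,\cdot)\|_{L^2(\Omega)}=0$ for every $t\in[0,T]$, hence $u_1=u_2$. The main obstacles I anticipate are bookkeeping in nature: first, verifying cleanly that $VG\in W^{1,1}(0,T;L^2(\Omega))$ with a constant depending only on $\omega$, $T$ and $M$, using that $V\in C([0,T],W^{2,\infty}(\Omega))$ and that $H^2(0,T;H^2(\Omega))\hookrightarrow C^1([0,T],H^2(\Omega))$; second, ensuring that the constant produced by the representative $G$ is absorbed into the definition of $\|(g,u_0)\|_{X_0}$ as an infimum, which is precisely why choosing $G$ with norm at most $2\|(g,u_0)\|_{X_0}$ (or any fixed multiple) is sufficient.
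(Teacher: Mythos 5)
Your proposal is correct and takes essentially the same approach as the paper: choose a lift $G\in H^2(0,T;H^2(\Omega))$ of $(g,u_0)$ with $\|G\|\le 2\|(g,u_0)\|_{X_0}$, reduce to the homogeneous IBVP \eqref{b1} with source $f=i\partial_t G+\Delta G-VG$ and zero initial datum, and invoke Proposition \ref{pr-exiuni}. The only difference is that you spell out the check that $f\in W^{1,1}(0,T;L^2(\Omega))$ and supply an explicit energy-estimate uniqueness argument, both of which the paper leaves implicit.
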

\begin{proof}
Choose $G \in W^2(0,T;H^2(\Omega))$ obeying $\tau_0 G=(g,u_0)$ and $\| W \|_{W^2(0,T;H^2(\Omega))} \leq 2 \| (g,u_0) \|_{X_0}$. Then $u$ is solution to \eqref{(1.1)} if and only if $u-G$ is solution to \eqref{b1} with $f=i \partial_t G+\Delta G -V G$ and $v_0=u_0-G(0,.)$. Therefore the result follows from this and Proposition \ref{pr-exiuni}.
\end{proof}

Armed with Corollary \ref{cor-exiuni} we turn now to defining $\Lambda _V$. We preliminarily need to introduce the trace operator $\tau_1$, defined as the linear bounded operator from $L^2 ((0,T)\times \mathbb{R},H^2(\omega ))\cap H^1(0,T; L^2(\Omega ))$ into $L^2(\Sigma )\times L^2(\Omega )$, which coincides with the mapping
$$ w \mapsto \left( \partial _{\nu} w_{| \Sigma},w(T,\cdot)\right)\ \mathrm{for}\ w \in C_0^\infty ([0,T]\times \mathbb{R},C^\infty (\overline{\omega })). $$
Evidently, we have
$$
\|\tau _1 \mathfrak{s}(g,u_0) \|_{X_1}\leq C\| \mathfrak{s}(g,u_0) \|_Z \leq C\|(g,u_0)\|_{X_0},
$$
by \eqref{(2.8)}, hence the linear operator $\Lambda _V = \tau_1 \circ \mathfrak{s}$ is bounded from $X_0$ into $X_1$ with
$\|\Lambda _V\|=\|\Lambda _V\|_{\B(X_0,X_1)}\leq C$.
Here and the remaining part of this text $C$ denotes some suitable generic positive constant.

\begin{remark} 
In light of \cite{LM2}[Chap. 4,\S 2] and since $\overline{\Omega}$ is a smooth manifold with boundary $\partial \Omega$, we may as well define $\Lambda_V (g,u_0)$ in a similar way as before for all $u_0 \in H^2(\Omega)$ and all
$$g \in H^{3 \slash 2,3 \slash 2}(\Sigma)=L^2(0,T;H^{3 \slash 2}(\partial \Omega)) \cap H^{3 \slash 2}(0,T;L^2(\partial \Omega)), $$
fulfilling the compatibility conditions \cite{LM2}[Chap. 4, (2.47)-(2.48)].
 Nevertheless there is no need to impose these conditions in our approach since they are automatically verified by any $(g,u_0) \in X_0$.
\end{remark}


\section{Floquet-Bloch-Gel'fand analysis}
\label{sec-dec}
In this section we introduce the partial Floquet-Bloch-Gel'fand transform (abbreviated to FBG in the sequel) which is needed to decompose the Cauchy problem \eqref{(1.1)} into a collection of IBVP with quasi-periodic boundary conditions of the form \eqref{(1.6)}. 

\subsection{Partial FBG transform}
\label{sec-FBG}
The main tool for the analysis of periodic structures such as waveguides is the partial FBG transform defined for every 
$f \in C_0^\infty (Q)$ by
\bel{c1}
\check{f}_\theta(t,x)=(\mathcal{U} f)_\theta (t,x)=\sum_{k=-\infty}^{+\infty}e^{-ik\theta} f(t,x_1+k,x'),\ t \in \R,\ x=(x_1,x') \in \Omega,\ \theta \in [0,2 \pi).
\ee
We notice from \eqref{c1} that
\bel{c2}
\check{f}_\theta(t,x_1+1,x')=e^{i \theta} \check{f}_\theta(t,x_1,x'),\ t \in \R,\ x_1 \in \R,\ x' \in \omega,\ \theta \in [0,2 \pi),
\ee
and
\bel{c3}
\left( \U \frac{\partial^m f}{\partial z^m} \right)_\theta=\frac{\partial^m \check{f}_\theta}{\partial z^m},\ m \in \N^*,\ \theta \in [0,2 \pi),
\ee
whenever $z=t$ or $x_j$ , $j=1,2,3$. With reference to \cite{RS2}[\S XIII.16], $\mathcal{U}$ extends to a unitary operator, still denoted by $\mathcal{U}$, from $L^2(Q)$ onto the Hilbert space 
$\int_{(0,2 \pi)}^\oplus L^2(\Omega') d \theta \slash (2 \pi)=L^2( (0,2 \pi)d\theta \slash (2 \pi) , L^2((0,T) \times \Omega'))$.

Let $H_{\sharp,loc}^s(Q)$, $s=1,2$, denote the subspace of distributions $f$ in $Q$ such that $f_{| (0,T) \times I \times \omega} \in H^s((0,T) \times I \times \omega)$ for any bounded open subset $I \subset \R$. Then a function $f \in H^s_{\sharp,loc}(Q)$ is said to be $1$-periodic with respect to $x_1$ if $f(t,x_1+1,x')=f(t,x_1,x')$ for a.e. $(t,x_1,x') \in Q$. The subspace of functions of $H_{\sharp,loc}^s(Q)$ which are $1$-periodic with respect to $x_1$ is denoted by $H_{\sharp,per}^s(Q)$. Such a function is obviously determined by its values on $Q'$ so we set $H_{\sharp,per}^s(Q')=\{ u_{|Q'},\ u \in H_{\sharp,per}^s(Q) \}$. In light of \eqref{c2}-\eqref{c3}
we next introduce $H_{\sharp,\theta}^s(Q')= \{ e^{i \theta x_1} u; u \in H_{\sharp,per}^s(Q') \}$ for every $\theta \in [0,2 \pi)$. In view of \cite{Di}[Chap. II, \S 1, D\'efinition 1] we have
$$ \U H^s(Q)= \int_{(0,2 \pi)}^\oplus H_{\sharp,\theta}^s(Q') \frac{d \theta}{2 \pi},\ s=1,2. $$

More generally, for an arbirary open subset $Y$ of $\R^n$, $n \in \N^*$, we define the FBG transform with respect to $x_1$ of  $f \in C_0^{\infty}(\R \times Y)$ by
$$
\check{f}_{Y,\theta}(x_1,y)=(\U_Y f)_\theta (x_1,y)=\sum_{k=-\infty}^{+\infty}e^{-ik\theta} f(x_1+k,y),\ x_1 \in \R,\ y \in Y,\ \theta \in [0,2 \pi),
$$
and extend it to a unitary operator $\U_Y$ from $L^2(\R \times Y)$ onto $\int_{(0,2 \pi)}^\oplus L^2((0,1) \times Y) d\theta \slash (2 \pi)$. Similarly as before we say that a function $f \in H^s_{\sharp,loc}(\R \times Y)=H_{loc}^s(\R,L^2(Y)) \cap L_{loc}^2(\R,H^s(Y))$, $s>0$, is $1$-periodic with respect to $x_1$ if $f(x_1+1,y)=f(x_1,y)$ for a.e. $(x_1,y) \in \R \times Y$. Then we note $H_{\sharp,per}^s(\R \times Y)$ the subspace of $1$-periodic functions with respect to $x_1$ of $H_{\sharp,loc}^s(\R \times Y)$ and set $H_{\sharp,per}^s((0,1) \times Y)=\{ u_{|(0,1) \times Y},\ u \in H_{\sharp,per}^s(\R \times Y) \}$. 
Next we put $H_{\sharp,\theta}^s((0,1) \times Y)=\{ e^{i \theta x_1} u,\ u \in H_{\sharp,per}^s((0,1) \times Y) \}$ for all $\theta \in [0,2 \pi)$, so we have
$$ \U_Y H^s(\R \times Y)= \int_{(0,2 \pi)}^\oplus H_{\sharp,\theta}^s((0,1) \times Y) \frac{d \theta}{2 \pi},\ s >0. $$
For the sake of simplicity we will systematically omit the subscript $Y$ in $\U_Y$ in the remaining part of this text.

\subsection{FBG decomposition}
\label{sec-FBGde}
Bearing in mind that $\mathscr{X}_{0,\theta}'=\tau_0'(H^2(0,T;H_{\sharp,\theta}^2(\Omega')))$, where we recall that $\tau_0'$ is the linear bounded operator from $H^2(0,T;H^2(\Omega'))$ into $L^2((0,T)\times (0,1),H^{3/2}(\partial \omega ))\times L^2(\Omega')$ satisfying
$$
\tau'_0 w = \left(w_{| \Sigma_*'},w(0,\cdot)\right)\ \mathrm{for}\ w\in C_0^\infty ((0,T)\times (0,1),C^\infty(\overline{\omega})),
$$
it is apparent that
\bel{c7} 
\mathscr{X}_0 = \U X_0 = \int_{(0,2 \pi)}^{\oplus} \mathscr{X}_{0,\theta}' \frac{d \theta}{2 \pi}\ \mbox{and}\ \U \tau_0 \U^{-1} = \int_{(0,2 \pi)}^{\oplus} \tau_0' \frac{d \theta}{2 \pi}.
\ee
Here the notation $\tau_0'$ stands for the operator $\tau_0'$ restricted to $H^2(0,T;H_{\sharp,\theta}^2(\Omega'))$.
Further we put $\mathscr{Z}_\theta'= L^2(0,T;H_{\sharp,\theta}^2(\Omega')) \cap  H^1(0,T;L^2(\Omega'))$, $\theta \in [0,2 \pi)$,  so we have
\bel{c8}
\mathscr{Z} = \U Z = \int_{(0,2 \pi)}^\oplus \mathscr{Z}_\theta' \frac{d \theta}{2 \pi}.
\ee
Then, applying the transform $\U$ to both sides of each of the three lines in \eqref{(1.1)} we deduce from \eqref{c8} the following:
\begin{proposition}
\label{pr-equiv}
Let $V \in W^{2,\infty}(0,T;W^{2,\infty}(\Omega))$ fulfill \eqref{(1.2)} and let $(g,u_0) \in X_0$. Then $u$ is the solution $\mathfrak{s}(g,u_0) \in Z$ to \eqref{(1.1)} defined in Corollary \ref{cor-exiuni} if and only if $\U u \in \mathscr{Z}$ and
each $\check{u}_\theta=(\U u)_\theta \in \mathscr{Z}_\theta$, $\theta \in [0,2 \pi)$, is solution to the following IBVP
\bel{c10}
\left\{
\begin{array}{ll}
(-i\partial _t -\Delta  +V ) v =0\ &\mbox{in}\ Q'=(0,T)\times \Omega',
\\
v(0,\cdot)=\check{u}_{0,\theta} &\mbox{in}\ \Omega',
\\ 
v=\check{g}_\theta &\mbox{on}\ \Sigma_*',
\end{array}
\right.
\ee
where $\check{g}_\theta$  (resp. $\check{u}_{0,\theta}$) stands for $(\U g)_\theta$ (resp. $(\U u_0)_\theta$), ie $(\check{g}_\theta,\check{u}_{0,\theta}) =( \U (g,u_0))_\theta$.
\end{proposition}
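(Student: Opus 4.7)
The idea is to push each equation in \eqref{(1.1)} through the partial FBG transform $\U$, exploiting its three key compatibility properties already established in Section \ref{sec-FBG}: the commutation with derivatives \eqref{c3}, the decomposition of the functional spaces \eqref{c7}--\eqref{c8}, and the $1$-periodicity \eqref{(1.2)} of the potential, which allows multiplication by $V$ to descend to the fiber level.

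For the direct implication, assume $u=\mathfrak{s}(g,u_0)\in Z$. Then \eqref{c8} gives $\U u\in\mathscr{Z}$ and, for a.e.\ $\theta\in[0,2\pi)$, the fiber $\check{u}_\theta$ belongs to $\mathscr{Z}_\theta'$, so it automatically satisfies the quasi-periodic boundary conditions encoded in $H_{\sharp,\theta}^2(\Omega')$. The key fiberwise identity is
\[ (\U(Vu))_\theta(t,x)=\sum_{k\in\Z}e^{-ik\theta}V(t,x_1+k,x')u(t,x_1+k,x')=V(t,x)\check{u}_\theta(t,x), \]
where the periodicity \eqref{(1.2)} is used to pull $V$ out of the sum. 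Combined with \eqref{c3} this turns the PDE in \eqref{(1.1)} into $(-i\partial_t-\Delta+V)\check{u}_\theta=0$ in $Q'$ for a.e.\ $\theta$. Since $\U$ acts only on the $x_1$-variable, it commutes with evaluation at $t=0$, yielding the initial condition $\check{u}_\theta(0,\cdot)=\check{u}_{0,\theta}$ in $\Omega'$. Finally \eqref{c7} identifies the lateral trace of $\check{u}_\theta$ on $\Sigma_*'$ with $\check{g}_\theta$, so $\check{u}_\theta$ solves \eqref{c10}.

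For the converse, if $\U u\in\mathscr{Z}$ and each fiber $\check{u}_\theta$ solves \eqref{c10}, then $u=\U^{-1}(\U u)\in Z$ by \eqref{c8}, and reversing the identities above shows that $u$ satisfies \eqref{(1.1)}; uniqueness of the solution in $Z$ (Corollary \ref{cor-exiuni}) forces $u=\mathfrak{s}(g,u_0)$. The only step that is not purely formal is the fiberwise multiplication identity displayed above: without the hypothesis \eqref{(1.2)} the operator of multiplication by $V$ would fail to be decomposable under $\U$ and the fiber IBVPs would couple across $\theta$. This is the hypothesis that really drives the whole decomposition, and checking that the computation is valid for $V\in W^{2,\infty}(0,T;W^{2,\infty}(\Omega))$ and $u\in Z$ --- by first working on the dense subspace $C_0^\infty(Q)$ and then invoking the continuity of $\U$ --- is the only technical point that requires care.
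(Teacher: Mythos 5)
Your proof is correct and follows the same route the paper takes, which is simply to push each of the three lines of \eqref{(1.1)} through $\U$ and use the decompositions \eqref{c7}--\eqref{c8}; the paper states this in one sentence and gives no details. You supply the missing substance — in particular the fiberwise identity $(\U(Vu))_\theta = V\,\check u_\theta$ driven by the $1$-periodicity \eqref{(1.2)}, and the uniqueness argument for the converse — and you are right that this multiplication identity is the crux that makes $V$ decompose under the direct integral.
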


\subsection{Reduced boundary operators}
\label{sec-rbo}
We first prove the following existence and uniqueness result for \eqref{c10} by arguing in the same way as in the derivation of Corollary \ref{cor-exiuni}.

\begin{lemma}
\label{lm-c}
Assume that $V$ obeys the conditions of Proposition \ref{pr-equiv} and satisfies $\| V \|_{W^2(0,T;W^{2,\infty}(\Omega'))} \leq M$ for some $M>0$. Then for every $(\check{g}_\theta,\check{u}_{0,\theta}) \in \mathscr{X}_{0,\theta}'$, $\theta \in [0,2 \pi)$, there exists a unique solution $\mathfrak{s}_\theta(\check{g}_\theta,\check{u}_{0,\theta}) \in \mathscr{Z}_\theta'$ to \eqref{c10}. Moreover we may find a constant $C=C(T,\omega,M)>0$ such that the estimate
\bel{c11} 
\| \mathfrak{s}_\theta(\check{g}_\theta,\check{u}_{0,\theta}) \|_{\mathscr{Z}_\theta'} \leq C \| (\check{g}_\theta,\check{u}_{0,\theta}) \|_{\mathscr{X}_{0,\theta}'},
\ee
holds for every $\theta \in [0,2 \pi)$.
\end{lemma}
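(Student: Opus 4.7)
The plan is to mimic exactly the argument used for Corollary \ref{cor-exiuni}, replacing the Dirichlet Laplacian $A_0$ on $\Omega$ with its quasi-periodic analogue $A_{0,\theta}$ on $\Omega'$. First, given $(\check{g}_\theta,\check{u}_{0,\theta})\in \mathscr{X}'_{0,\theta}$, I would select by definition of the quotient norm a lift $G\in H^2(0,T;H^2_{\sharp,\theta}(\Omega'))$ with
\[
\tau_0'G=(\check{g}_\theta,\check{u}_{0,\theta}),\qquad \|G\|_{H^2(0,T;H^2(\Omega'))}\le 2\,\|(\check{g}_\theta,\check{u}_{0,\theta})\|_{\mathscr{X}'_{0,\theta}}.
\]
Setting $v=u-G$, the IBVP \eqref{c10} is then equivalent to the homogeneous-boundary problem
\[
-i\partial_t v-\Delta v+Vv=f:=i\partial_tG+\Delta G-VG,\ v(0,\cdot)=0,\ v_{|\Sigma_*'}=0,
\]
endowed with the quasi-periodicity conditions inherited from $H^2_{\sharp,\theta}(\Omega')$. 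Here $f\in W^{1,1}(0,T;L^2(\Omega'))$ with norm controlled by $\|G\|_{H^2(0,T;H^2(\Omega'))}$ and $\|V\|_{W^{2,\infty}(0,T;W^{2,\infty}(\Omega'))}$, so we gain a quantitative dependence on $M$.

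The central object is then the operator $A_{0,\theta}$ in $L^2(\Omega')$ associated to the closed quadratic form
\[
a_{0,\theta}(u)=\int_{\Omega'}|\nabla u|^2\,dx,\qquad D(a_{0,\theta})=\{u\in H^1(\Omega');\ u_{|\Sigma_*'}=0,\ u(1,\cdot)=e^{i\theta}u(0,\cdot)\ \text{on}\ \omega\},
\]
which is self-adjoint and non-negative. I expect that
\[
D(A_{0,\theta})=H^2_{\sharp,\theta}(\Omega')\cap\{u;\ u_{|\Sigma_*'}=0\},
\]
with equivalent norms. This is the analogue of Lemma \ref{lemma2.2} and constitutes the main technical step. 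I would establish it by a discrete FBG-type expansion, substituting for $\mathscr{F}$ the isometry $u\mapsto(u_n)_{n\in\Z}$ with $u_n(x')=\int_0^1 e^{-i(\theta+2\pi n)x_1}u(x_1,x')\,dx_1$, so that
\[
A_{0,\theta}u=\sum_{n\in\Z}\bigl[-\Delta_{x'}+(\theta+2\pi n)^2\bigr]u_n\,e^{i(\theta+2\pi n)x_1},
\]
with each fiber $-\Delta_{x'}+(\theta+2\pi n)^2$ being the Dirichlet Laplacian on $\omega$ shifted by a non-negative constant; then Agmon's regularity on $\omega$ together with $H^2$--$L^2$ equivalence $\|\Delta_{x'}v\|_{L^2(\omega)}\sim\|v\|_{H^2(\omega)}$ on $H^1_0(\omega)\cap H^2(\omega)$ (\cite{Ev}[\S 6.3, Theorem 4]) yields the claim, with constants uniform in $\theta\in[0,2\pi)$.

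Granting the domain identification, multiplication by $V$ defines an operator $B(t)\in C([0,T],\B(D(A_{0,\theta})))$ with norm bounded by a constant multiple of $M$, uniformly in $\theta$. The operator $M_0:=-iA_{0,\theta}$ is m-dissipative on $L^2(\Omega')$ with dense domain, so Lemma \ref{lemma2.1} applies and provides a unique
\[
v\in C([0,T],D(A_{0,\theta}))\cap C^1([0,T],L^2(\Omega'))\subset \mathscr{Z}'_\theta,
\]
satisfying $\|v\|_{\mathscr{Z}'_\theta}\le C(\|f\|_{W^{1,1}(0,T;L^2(\Omega'))})\le C\|G\|_{H^2(0,T;H^2(\Omega'))}$ with $C=C(T,\omega,M)$ independent of $\theta$. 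Setting $\mathfrak{s}_\theta(\check{g}_\theta,\check{u}_{0,\theta}):=v+G$ and using the choice of $G$ yields \eqref{c11}. Uniqueness follows at once from Lemma \ref{lemma2.1} applied to the difference of two solutions.

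The main obstacle is therefore the $\theta$-uniform identification of $D(A_{0,\theta})$ as an $H^2$-type space: routine elliptic regularity is local, so the quasi-periodic junction across $\{x_1=0\}\sim\{x_1=1\}$ must be handled carefully, and this is why the discrete Fourier decomposition in $x_1$ (which diagonalizes the operator and reduces the question to uniform estimates on the fibers) is the natural tool. Once this is in place, everything else in Lemma \ref{lm-c} is a transcription of the proof of Corollary \ref{cor-exiuni}.
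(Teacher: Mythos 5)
Your proposal is correct and follows essentially the same route as the paper: define the quasi-periodic Laplacian $A_{0,\theta}$ via its form, identify $D(A_{0,\theta})$ with $H^2_{\sharp,\theta}(\Omega')\cap L^2(0,1;H^1_0(\omega))$ by the discrete quasi-periodic Fourier decomposition in $x_1$ (the paper defers this step to a footnote pointing back to the argument of Lemma~\ref{lemma2.2}), then invoke Lemma~\ref{lemma2.1} with $M_0=-iA_{0,\theta}$ and reduce the inhomogeneous boundary data to zero via a lift $G$ with controlled $H^2(0,T;H^2)$ norm. The only difference is that you spell out the fiber decomposition explicitly, which the paper leaves implicit.
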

\begin{proof}
Let $A_{0,\theta}$ be the selfadjoint operator in $L^2(\Omega')$ generated by the closed quadratic form 
$$ a_{0,\theta}(u)=\int_{\Omega'} | \nabla u(x) |^2 dx,\ u \in D(a_{0,\theta}) = L^2(0,1;H_0^1(\omega)) \cap H_{\sharp,\theta}^1(0,1;L^2(\omega)), $$
in such a way that $A_{0,\theta}$ acts as $(-\Delta)$ on its \footnote{This can be easily seen by arguing in the exact same way as in the derivation of Lemma \ref{lemma2.2}.}{domain} $D(A_{0,\theta})=H_{\sharp,\theta}^2(\Omega') \cap L^2(0,1;H_0^1(\omega))$. Let $B$ denote the multiplier by $V \in C([0,T],W^{2,\infty}(\Omega'))$. Due to \eqref{(1.2)} 
we have $B \in C([0,T],\B(D(A_{0,\theta})))$ and $\| B \|_{C([0,T],\B(D(A_{0,\theta})))} \leq \|V \|_{C([0,T],W^{2,\infty}(\Omega'))}$. Therefore, for every $f \in W^{1,1}(0,T;L^2(\Omega'))$ and $v_{0,\theta} \in H_{\sharp,\theta}^2(\Omega') \cap L^2(0,1;H_0^1(\omega))$ there is unique solution
$v_\theta \in L^2(0,T;L^2(0,1;H_0^1(\omega)) \cap H_{\sharp,\theta}^2(\Omega')) \cap H^1(0,T;L^2(\Omega'))$ to the IBVP
\bel{c12}
\left\{
\begin{array}{ll}
-i \partial _t v -\Delta v+V v = f &\mbox{in}\ Q',
\\
v(0,\cdot)=v_{0,\theta} &\mbox{in}\ \Omega' ,
\\
v=0 &\mbox{on}\ \Sigma_*',
\end{array}
\right.
\ee
by Lemma \ref{lemma2.1}, satisfying
$$
\| v_\theta \|_{L^2(0,T;H_{\sharp,\theta}^2(\Omega'))} + \| v_\theta \|_{H^1(0,T;L^2(\Omega'))} \leq C \left(\|v_{0,\theta}\|_{H^2(\Omega' )}+\|f\|_{W^{1,1}(0,T; L^2(\Omega' ))}\right).
$$
Further, from the very definition of  $\mathscr{X}_{0,\theta}'$ we may find $W_\theta \in H^2(0,T;H_{\sharp,\theta}^2(\Omega'))$ such that $\tau_{0,\theta}' W_\theta = (\check{g}_\theta,\check{u}_{0,\theta})$ and $\| W_\theta \|_{H^2(0,T;H^2(\Omega'))} \leq 2 \| (\check{g}_\theta, \check{u}_{0,\theta}) \|_{\mathscr{X}_{0,\theta}}$. Thus, taking $f=i \partial_t W_\theta + \Delta W_\theta - V W_\theta$ and $v_{0,\theta}=u_{0,\theta}-W_\theta(0,\cdot)$ in \eqref{c12}, it is clear that $v_\theta - W_\theta$ is solution to \eqref{c10} if and only if $v_\theta$ is solution to \eqref{c12}. This yields the desired result.
\end{proof}
In virtue of Lemma \ref{lm-c} the linear operator $\mathfrak{s}_\theta$ is thus bounded from $\mathscr{X}_{0,\theta}'$ into $\mathscr{Z}_\theta'$, with 
\bel{c13}
\| \mathfrak{s}_\theta \|=\| \mathfrak{s}_\theta \|_{\B(\mathscr{X}_{0,\theta}',\mathscr{Z}_\theta')} \leq C,\ \theta \in [0,2 \pi).
\ee
Let $\tau_1'$ be the linear bounded operator from $L^2((0,T) \times (0,1),H^2(\Omega')) \cap H^1(0,T;L^2(\Omega'))$ into $\mathscr{X}_1'=L^2((0,T)\times (0,1) \times \partial \omega )\times L^2(\Omega')$, obeying
$$
\tau_1' w = \left(\partial_\nu w_{|\Sigma_*'},w(T,\cdot)\right)\ \mathrm{for}\ w\in C_0^\infty ((0,T)\times (0,1),C^\infty(\overline{\omega})),
$$
in such a way that
\bel{c15}
 \mathscr{X}_1 = \U X_1 = \int_{(0,2 \pi)}^{\oplus} \mathscr{X}_{1}' \frac{d \theta}{2 \pi}\ \mathrm{and}\ \U \tau_1 \U^{-1} = \int_{(0,2 \pi)}^{\oplus} \tau_1' \frac{d \theta}{2 \pi},
\ee
In light of \eqref{c11} we then have
$$ \| \tau_1' \mathfrak{s}_\theta (\check{g}_\theta,\check{u}_{0,\theta}) \|_{\mathscr{X}_1'} \leq  C \| \mathfrak{s}_\theta (\check{g}_\theta,\check{u}_{0,\theta}) \|_{\mathscr{Z}_\theta'} \leq C \| (\check{g}_\theta,\check{u}_{0,\theta}) \|_{\mathscr{X}_{0,\theta}'},\ \theta \in [0,2 \pi), $$
so the reduced boundary operator $\Lambda_{V,\theta}=\tau_1' \circ \mathfrak{s}_\theta \in \B(\mathscr{X}_{0,\theta}',\mathscr{X}_1')$.

Further, bearing in mind \eqref{c7}-\eqref{c8} and \eqref{c15} we deduce from Proposition \ref{pr-equiv} and Lemma \ref{lm-c} that
$$ \U \Lambda_V \U^{-1} = \int_{(0,2 \pi)}^\oplus \Lambda_{V,\theta} \frac{d \theta}{2 \pi}. $$
In light of \cite{Di}[Chap. II, \S 2, Proposition 2] this finally entails that
\bel{c20}
\| \Lambda_V \|_{\B(X_0,X_1)} = \sup_{\theta \in (0,2 \pi)} \| \Lambda_{V,\theta} \|_{\B(\mathscr{X}_{0,\theta}',\mathscr{X}_1')}.
\ee

\section{Optics geometric solutions}
\label{sec-ogs}

Let $r>0$ and $\theta \in [0,2 \pi)$ be fixed. This section is devoted to building optics geometric solutions to the system
\begin{equation}
\label{(4.1)}
\left\{
\begin{array}{ll}
(-i\partial _t -\Delta  +V)v=0 &  \mbox{in}\; Q',
\\
u(\cdot ,1,\cdot )=e^{i\theta} u(\cdot ,0,\cdot) &  \mathrm{on}\ (0,T) \times \omega,
\\
\partial_{x_1}u(\cdot ,1,\cdot )=e^{i\theta}\partial_{x_1}u(\cdot ,0,\cdot) & \mathrm{on}\ (0,T) \times \omega.
\end{array}
\right.
\end{equation}
Specifically, we seek solutions $u_{k,\theta}$, $k \in \Z$, to \eqref{(4.1)} of the form
\[
u_{k,\theta}(t,x)=\left( e^{i \theta x_1}+w_{k,\theta}(t,x) \right)e^{-i\left( (\tau+ 4 \pi^2 k^2)t+x'\cdot\xi_j+2 \pi k x_1 \right)},\ x=(x_1,x') \in \Omega ',
\]
where $w_{k,\theta} \in H^2(0,T;H_{\sharp,\theta}^{2}(\Omega '))$ fulfills
$$
\| w_{k,\theta} \|_{H^2(0,T;H^{2}(\Omega '))}\leq \frac{c}{r}(1+|k|), 
$$
for some constant $c>0$ which is independent of $r$, $k$ and $\theta$.
The main issue here is the quasi-periodic condition imposed on $w_{k,\theta}$. To overcome this problem we shall adapt  the framework introduced in \cite{Ha} for defining optics geometric solutions in periodic media.

\subsection{Optics geometric solutions in periodic media}
Fix $R>0$ and put $\OO=(-R,R) \times(0,1) \times (-R,R)^2$. We recall that $u \in H^1_{loc}(\R^4)$ is $\OO$-periodic if it satisfies
$$
u(y+2R \E_j)=u(y),\ j=0,2,3,\ \textrm{and}\ u(y+ \E_1)=u(y),\ \textrm{a.e.}\ y=t \E_0 + \sum_{j=1}^3 x_j \E_j \in \OO, 
$$
where $\{ \E_j \}_{j=0}^3$ denotes the canonical basis of $\R^4$. We note $H^1_{per}(\OO)$ the subset of $\OO$-periodic functions in $H^1_{loc}(\R^4)$, endowed with the scalar product of $H^1(\OO)$. 
Similarly we define $H_{per}^2(\OO)=\{ u \in H_{per}^1(\OO),\ \partial_k u \in H_{per}^1(\OO),\ k=0,1,2,3 \}$.

Further we introduce the space
$$
\mathscr{H}_{\theta}= \{ e^{i \theta x_1} e^{i\frac{\pi x_2}{2R}} u;\; u \in H_{loc}^2(\R_t;H_{loc}^2(\R^3)) \cap H_{per}^2(\OO) \},\ \theta \in [0,2 \pi),
$$
which is Hilbertian for the natural norm of $\mathscr{H}^2=H^2(-R,R;H^{2}( (0,1) \times (-R,R)^2 ))$, and mimmick the proof of \cite{Ha}[Theorem 1] or \cite{Cho}[Proposition 2.19] to claim the coming technical result.

\begin{lemma}
\label{l2}
Let $s>0$, let $\kappa \in \R^4$ be such that $\kappa \cdot \E_2=0$, and set $\vartheta=s \E_2+ i \kappa$. Then for every $h \in \mathscr{H}^2$ the equation
\bel{l2a}
-i\partial_t \psi-\Delta \psi+2 \vartheta \cdot \nabla\psi=h\ \textrm{in}\ \OO,
\ee
admits a unique solution $\psi \in \mathscr{H}_{\theta}$. Moreover, it holds true that
\bel{l2b}
\| \psi \|_{\mathscr{H}^2} \leq  \frac{R}{s\pi} \| h \|_{\mathscr{H}^2}.
\ee
\end{lemma}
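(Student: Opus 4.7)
The plan is to solve~\eqref{l2a} by Fourier series on the flat torus underlying $\OO$, twisted by the quasi-periodic phases that define $\mathscr{H}_\theta$. Every $\psi\in\mathscr{H}_\theta$ has the form $\psi=e^{i\theta x_1}e^{i\pi x_2/(2R)}u$ with $u\in H^2_{per}(\OO)$, which admits the standard Fourier expansion along the periods $(2R,1,2R,2R)$; absorbing the two phases into the frequencies rewrites $\psi$ as $\sum_{m\in\Z^4}\widehat{u}_m\,e^{iy\cdot\xi_m}$, where $y=(t,x_1,x_2,x_3)$ and
$$\xi_m=\bigl(\tfrac{\pi m_0}{R},\;2\pi m_1+\theta,\;\tfrac{\pi(2m_2+1)}{2R},\;\tfrac{\pi m_3}{R}\bigr).$$
Since multiplication by $e^{i\theta x_1}e^{i\pi x_2/(2R)}$ is unitary on $L^2(\OO)$, the family $\{e^{iy\cdot\xi_m}\}_{m\in\Z^4}$ is a complete orthogonal system in $L^2(\OO)$, and the operator $L:=-i\partial_t-\Delta+2\vartheta\cdot\nabla$ preserves the quasi-periodic class, so I will represent $h$ in the same basis as $h=\sum_m\widehat{h}_m\,e^{iy\cdot\xi_m}$.

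The key step is the symbol computation. A direct calculation gives $L(e^{iy\cdot\xi})=p(\xi)\,e^{iy\cdot\xi}$ with
$$p(\xi)=\xi_0+\xi_1^2+\xi_2^2+\xi_3^2+2i\vartheta\cdot\xi.$$
Writing $\vartheta=s\E_2+i\kappa$ with $\kappa\cdot\E_2=0$ yields $2i\vartheta\cdot\xi=-2\kappa\cdot\xi+2is\xi_2$, so the imaginary part of $p$ equals exactly $2s\xi_2$. On the shifted lattice, $\xi_{m,2}=\pi(2m_2+1)/(2R)$ is an odd multiple of $\pi/(2R)$; the half-integer shift produced by the $e^{i\pi x_2/(2R)}$ factor in the definition of $\mathscr{H}_\theta$ -- which is the very reason for its presence -- yields the non-resonance bound
$$|p(\xi_m)|\ge|\mathrm{Im}\,p(\xi_m)|\ge \frac{s\pi}{R},\qquad m\in\Z^4.$$

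Once this bound is available, the equation $L\psi=h$ decouples into the scalar relations $p(\xi_m)\widehat{u}_m=\widehat{h}_m$, uniquely solved by $\widehat{u}_m=\widehat{h}_m/p(\xi_m)$ with the pointwise estimate $|\widehat{u}_m|\le(R/(s\pi))\,|\widehat{h}_m|$. Since the $\mathscr{H}^2$-norm is equivalent to a weighted $\ell^2$-norm on the Fourier coefficients in the basis $\{e^{iy\cdot\xi_m}\}_m$, this termwise control transfers directly to~\eqref{l2b}. Uniqueness in $\mathscr{H}_\theta$ follows from $p(\xi_m)\neq0$ on every mode, while $\psi=\sum_m\widehat{u}_m\,e^{iy\cdot\xi_m}$ lies in $\mathscr{H}_\theta$ by construction.

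The main obstacle I anticipate is bookkeeping rather than analysis: I will need to identify the anisotropic norm of $\mathscr{H}^2=H^2(-R,R;H^2((0,1)\times(-R,R)^2))$ with the weighted $\ell^2$-norm $\sum_m(1+\xi_{m,0}^2)^2(1+\xi_{m,1}^2+\xi_{m,2}^2+\xi_{m,3}^2)^2|\widehat{u}_m|^2$, and verify that the resulting series defines a function in $H^2_{loc}(\R_t;H^2_{loc}(\R^3))\cap H^2_{per}(\OO)$ once the two phase factors are stripped off, so that $\psi$ is a genuine element of $\mathscr{H}_\theta$ and not merely of the ambient $\mathscr{H}^2$.
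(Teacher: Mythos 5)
Your proposal is correct and follows essentially the same approach as the paper's proof: expand in the quasi-periodic Fourier basis adapted to $\mathscr{H}_\theta$, compute the symbol $p(\xi)$ of $L=-i\partial_t-\Delta+2\vartheta\cdot\nabla$, observe that the $\pi/(2R)$ shift in the $x_2$-frequency (built into $\mathscr{H}_\theta$ for precisely this purpose) forces $|\mathrm{Im}\,p(\xi_m)|=2s|\xi_{m,2}|\ge s\pi/R$, and then solve mode-by-mode to obtain existence, uniqueness, and the bound \eqref{l2b}. The only cosmetic differences are notational (the paper works with the normalized basis $\phi_\alpha$ indexed over the twisted lattice $\Z_\theta$, while you absorb the phase factors into shifted frequencies $\xi_m$ from the outset), and the bookkeeping you flag at the end -- the equivalence of the anisotropic $\mathscr{H}^2$-norm with a weighted $\ell^2$-norm and the continuity of the $x_1$-trace operators ensuring $\psi\in\mathscr{H}_\theta$ -- is exactly what the paper also records in a single line.
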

\begin{proof} 
For all $\alpha \in \Z_\theta=\theta \E_1+\frac{\pi}{2R} \E_2+\left(\frac{\pi}{R} \Z\right) \times \Z \times \left(\frac{\pi}{R} \Z\right)^2$, put
$$
 \phi_\alpha(y)=\frac{1}{(2R)^{\frac{3}{2}}}e^{i\alpha\cdot y},\ y=(t,x) \in \OO, 
$$
in such a way that $\{ \phi_{\alpha} \}_{\alpha \in \Z_{\theta}}$ is a Hilbert basis of $L^2(\OO)$. Assume that $\psi\in \mathscr{H}_{\theta}$ is solution to \eqref{l2a}.
Then for each $\alpha \in \Z_\theta$ it holds true that $\langle h , \phi_\alpha \rangle_{L^2(\OO)} = \langle -i\partial_t \psi-\Delta \psi+2 \vartheta \cdot \nabla\psi,\phi_\alpha \rangle_{L^2(\OO)}$ whence 
\beas
\langle h , \phi_\alpha \rangle_{L^2(\OO)} & =  & \langle \psi,-i\partial_t \phi_\alpha-\Delta \phi_\alpha-2\overline{\vartheta} \cdot \nabla \phi_\alpha \rangle_{L^2(\OO)} 
\\
& = &
\left(\alpha_0+\sum_{j=1}^3 \alpha_j^2-2\kappa \cdot\alpha +2is\alpha_2 \right) \langle \psi,\phi_\alpha \rangle_{L^2(\OO)},
\eeas
by integrating by parts, with
\bel{eq2b}
\left| \Im\left({\alpha_0+\sum_{j=1}^3 \alpha_j^2-2 \kappa \cdot\alpha+2is\alpha_2}\right) \right| = 2 s | \alpha_2 | \geq\frac{s\pi}{R}.
\ee
Therefore we necessarily have
\begin{equation}
\label{e1}
\psi=\sum_{\alpha\in {Z}_\theta}\frac{\langle h, \phi_\alpha \rangle_{L^2(\OO)}}{\alpha_0+\sum_{j=1}^3\alpha_j^2-2 \kappa \cdot\alpha+2is\alpha_2}\phi_\alpha.
\end{equation}
On the other hand the function $\psi$ defined by the right hand side of \eqref{eq2b} is in $\mathscr{H}^2$ since 
\beas
\| \psi \|_{\mathscr{H}^2}^2 & =& \sum_{\alpha \in \Z_{\theta}} \frac{(1+\alpha_0^2+\alpha_0^4) \left( \sum_{1 \leq j,l \leq 3} \alpha_j^2 \alpha_l^2 \right) | \langle h,\phi_\alpha \rangle_{L^2(\OO)} |^2}{\left| \alpha_0+\sum_{j=1}^3 \alpha_j^2-2 \kappa \cdot\alpha +2is\alpha_2\right|^2} \\
& \leq & \frac{R^2}{s^2 \pi^2} \sum_{\alpha \in \Z_{\theta}} (1+\alpha_0^2+\alpha_0^4) 
\left( \sum_{1 \leq j,l \leq 3} \alpha_j^2 \alpha_l^2 \right) | \langle h,\phi_{\alpha} \rangle_{L^2(\OO)} |^2 < +\infty.
\eeas
Here we used the fact that the last sum over $\alpha \in \Z_{\theta}$ is equal to $\| h \|_{\mathscr{H}^2}^2$, which incidentally entails \eqref{l2b}. Finally the trace operators $w \mapsto \partial_{x_1}^m w_{|[-R,R]\times \{ 0,1 \} \times [-R,R]^2}$ being continuous on $\mathscr{H}^2$ for $m=0,1$, we end up getting that $\psi \in \mathscr{H}_\theta$.
\end{proof}

\begin{remark}
It should be noticed that in contrast to \cite{Ha}[Theorem 1] where the fundamental $H^2$-solutions $\psi$ to the Faddeev-type equation are obtained from any $L^2$-right hand side $h$, it is actually required in Lemma \ref{l2} that $h$ be taken in $\mathscr{H}^2$. This boils down to the fact that the elliptic regularity of the Faddeev equation does not hold for the Schr\"odinger equation \eqref{l2a}.
\end{remark}

\subsection{Building optics geometric solutions}
We first deduce from Lemma \ref{l2} the:

\begin{lemma}
\label{l3}  
Let $\xi \in \C^2 \setminus \R^2$ verify
\bel{llc}
\Im{\xi} \cdot \Re{\xi}=0.
\ee 
Then, for all $\theta \in [0,2 \pi)$ and $k \in \Z$, there exists $E_{k,\theta} \in \B(H^2(0,T; H^2(\Omega')), H^2(0,T; H_{\sharp,\theta}^2(\Omega')))$
such that $\varphi=E_{k,\theta} f$, where $f \in  H^2(0,T; H^2(\Omega'))$, is solution to the equation
\bel{l3a}
(-i\partial_t - \Delta +4i \pi k \partial_{x_1}+2 i \xi \cdot \nabla_{x'}) \varphi =f\; \mbox{in}\; Q'.
\ee
Moreover we have
\bel{l3b}
\| E_{k,\theta} \|_{\B(H^2(0,T;H^{2}(\Omega ')))}\leq \frac{c_0}{| \Im{\xi} |},
\ee
for some constant $c_0>0$ independent of $\xi$, $k$ and $\theta$.
\end{lemma}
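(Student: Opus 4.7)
The plan is to reduce equation \eqref{l3a} posed on $Q'$ to an instance of \eqref{l2a} posed on the four-dimensional box $\OO=(-R,R)\times(0,1)\times(-R,R)^2$ for suitable $R>0$, apply Lemma \ref{l2} there, and restrict the resulting solution back to $Q'$. Three ingredients are required: an orthogonal change of the $x'$-variables bringing $\xi$ into the form demanded by the specific shape $\vartheta=s\E_2+i\kappa$ of Lemma \ref{l2}; a bounded $H^2$-extension of the datum $f$ from $Q'$ to $\OO$; and verification that the restriction of the Lemma \ref{l2} solution inherits the $x_1$-quasi-periodic conditions defining $H^2(0,T;H^2_{\sharp,\theta}(\Omega'))$.

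Since $\Im\xi\neq 0$ and $\Im\xi\cdot\Re\xi=0$, choose $O\in SO(2)$ sending $\Im\xi$ to $-|\Im\xi|(1,0)$, so that orthogonality forces $\Re\xi$ onto $\rho(0,1)$ for some $\rho\in\R$. Setting $\tilde x'=Ox'$ and $\tilde\varphi(t,x_1,\tilde x')=\varphi(t,x_1,O^{-1}\tilde x')$ (and $\tilde f$ analogously), the identity $\xi\cdot\nabla_{x'}=(O\xi)\cdot\nabla_{\tilde x'}$ converts \eqref{l3a} into the same equation with $\xi$ replaced by $O\xi$; since rotations preserve all $H^2$-norms it is enough to prove the result in the rotated coordinates. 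We therefore assume $\xi_1=-i|\Im\xi|$, $\xi_2=\rho$, and a direct computation gives
\[
4i\pi k\,\partial_{x_1}+2i\xi\cdot\nabla_{x'}=2\vartheta\cdot\nabla,\qquad \vartheta=|\Im\xi|\E_2+i\kappa,\ \ \kappa=2\pi k\E_1+\rho\E_3,
\]
so $s:=|\Im\xi|>0$ and $\kappa\cdot\E_2=0$, which is exactly the configuration covered by Lemma \ref{l2}.

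Fix $R>T/2$ large enough that $O\omega\subset\subset(-R,R)^2$. A standard Sobolev extension in $t$ (from $(0,T)$ to $(-R,R)$) and in $x'$ (from $O\omega$ to $(-R,R)^2$), composed with a smooth cutoff supported strictly inside $\OO$, supplies a bounded linear operator $E\colon H^2(0,T;H^2(\Omega'))\to\mathscr{H}^2$ with $(Ef)|_{Q'}=f$ and $\|Ef\|_{\mathscr{H}^2}\leq C\|f\|_{H^2(0,T;H^2(\Omega'))}$ for some $C=C(T,\omega,R)>0$. Applying Lemma \ref{l2} to $h=Ef$ yields a unique $\Psi\in\mathscr{H}_\theta$ with $\|\Psi\|_{\mathscr{H}^2}\leq (R/(|\Im\xi|\pi))\|Ef\|_{\mathscr{H}^2}$; setting $E_{k,\theta}f:=\Psi|_{Q'}$, chaining the two estimates and unwinding the rotation gives \eqref{l3b} with $c_0=RC/\pi$. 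The quasi-periodic $x_1$-conditions follow at once from membership $\Psi\in\mathscr{H}_\theta$: writing $\Psi=e^{i\theta x_1}e^{i\pi x_2/(2R)}u$ with $u$ 1-periodic in $x_1$ one reads $\Psi(t,1,x')=e^{i\theta}\Psi(t,0,x')$ and, by differentiation, the analogous identity for $\partial_{x_1}\Psi$, so that $E_{k,\theta}f\in H^2(0,T;H^2_{\sharp,\theta}(\Omega'))$.

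The main technical obstacle is the construction of the extension operator $E$: although each ingredient (Sobolev extension and smooth cutoff) is classical, they have to be composed across variables of distinct nature (the time interval and the rotated cross-section $O\omega$) while vanishing near $\partial\OO$, so that the extended datum belongs to $\mathscr{H}^2$ and Lemma \ref{l2} can be invoked without triggering any spurious compatibility conditions.
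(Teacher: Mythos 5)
Your proposal follows essentially the same route as the paper's proof: rotate the cross-sectional variable so that $\Im\xi$ aligns with the distinguished direction $\E_2$ of Lemma~\ref{l2}, verify $\kappa\cdot\E_2=0$ via the orthogonality hypothesis \eqref{llc}, extend the datum from $Q'$ to $\mathcal{O}$ by a Sobolev extension operator composed with a cutoff, invoke Lemma~\ref{l2}, and pull back. The paper phrases the rotation as a change of the data ($\tilde f = f(\cdot,\cdot,S^*\cdot)$ and $E_{k,\theta}f = \psi(\cdot,\cdot,S\cdot)$) rather than of the coordinate frame, but that is only a bookkeeping difference. One tiny slip: since the extension operator maps $(0,T)$ into $(-R,R)$, you need $R>T$ rather than $R>T/2$.
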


\begin{proof}
Pick $R>0$ so large that any planar rotation around the origin of $\R^2$ maps $\omega$ into $(-R,R)^2$. Next, bearing in mind that $r=| \Im \xi| >0$, we call $S$ the unique planar rotation around $0_{\R^2} \in \omega$, mapping
the second vector $\mathfrak{e}_2$ in the canonical basis of $\R^2$ onto $-\Im \xi \slash r$:
\bel{eq3} 
S \mathfrak{e}_2 = -\frac{\Im \xi}{r}.
\ee
Further, pick $f\in H^2(0,T; H^2(\Omega '))$, and put
\bel{eq3b}
 \widetilde{f}(t,x_1,x') = f(t,x_1,S^* x'),\ (t,x_1,x') \in (0,T) \times (0,1)\times S \omega  , 
 \ee
where $S^*$ denotes the inverse transformation to $S$. Evidently, $\widetilde{f} \in H^2(0,T;H^{2}((0,1)\times S \omega ))$. Moreover, as $\partial \omega$ is $C^2$, there exists 
\[
 P \in \B(H^2(0,T;H^{2}((0,1)\times S \omega )), H^2(\R,H^{2}((0,1)\times \R^2 ))),
\]
such that $(P \tilde{f})_{\vert (0,T) \times (0,1)\times S \omega }=\widetilde{f}$, by \cite{LM1}[Chap. 1, Theorems 2.2 \& 8.1]. Let $\chi =\chi (t,x')\in C^\infty_0((-R,R)^3)$ fulfill $\chi=1$ in a neighborhood of $[0,T] \times \overline{S \omega}$. Then the function
\bel{eq3c}
h(t,x_1,x')=\chi(t,x')(P\tilde{f})(t,x_1,x'),\; (t,x_1,x')\in \OO,
\ee
belongs to $\mathscr{H}^2$. Moreover it holds true that 
\[
h_{| (0,T)\times (0,1)\times S \omega}=\widetilde{f}.
\]

The next step of the proof is to choose $\kappa=(0,2 \pi k,S^*\Re \xi ) \in \R^4$ so we get
\[
\kappa \cdot \E_2 = S^* \Re \xi \cdot \mathfrak{e}_2 = \Re \xi \cdot S \mathfrak{e}_2 = -\frac{\Re \xi \cdot \Im \xi}{r}=0, 
\]
by combining \eqref{llc} with \eqref{eq3}. We call $\psi$ the $\mathscr{H}_\theta$-solution to
\eqref{l2a} obtained by applying Lemma \ref{l2} with $\vartheta=r \E_2 + i \kappa$ and $h$ given by \eqref{eq3}--\eqref{eq3c}, and put
\bel{eq3d}
(E_{k,\theta} f)(t,x_1,x')=\psi(t,x_1,Sx'),\ (t,x_1,x')\in Q'.
\ee
Obviously, $E_{k,\theta} f \in H^2(0,T;H_{\sharp,\theta}^{2}(\Omega'))$ and \eqref{l2b} yields 
\bel{eq4}
\| E_{k,\theta} f \|_{H^2(0,T;H^{2}(\Omega '))} \leq C\| \psi \|_{\mathscr{H}^2} \leq \frac{CR}{r\pi} \| h \|_{\mathscr{H}^2},
\ee
Furthermore, in light of \eqref{eq3b}-\eqref{eq3c}
we have
\[
 \| h \|_{\mathscr{H}^2} \leq \| P \tilde{f} \|_{H^2(\R,H^{2}(\R^2\times(0,1)))}
\leq \| P \| \| \tilde{f} \|_{H^2(0,T;H^{2}((0,1)\times S \omega ))}
\leq C\| f \|_{H^2(0,T;H^{2}(\Omega '))}, 
\]
where $\| P \|$ stands for the norm of $P$ in the space of linear bounded operators acting from $H^2(0,T;H^{2}((0,1)\times S \omega ))$ into $H^2(\R,H^{2}((0,1)\times \R^2 ))$.
Putting this together with \eqref{eq4}, we end up getting \eqref{l3b}. 

This being said, it remains to show that $\varphi=E_{k,\theta}f$ is solution to \eqref{l3a}.
To see this we notice from \eqref{eq3d} that
$\varphi = \psi \circ F$, where $F$ is the unitary transform $(t,x_1,x') \mapsto (t,x_1,S x')$ in $\R^4$. As a consequence we have $\nabla \varphi= F \nabla \psi \circ F$, whence 
\bel{eq5}
\vartheta \cdot \nabla \psi \circ F = F \vartheta \cdot \nabla \varphi = i 2 \pi k \partial_{x_1} \varphi +i \xi \cdot \nabla_{x'} \varphi ,
\ee
and 
\bel{eq5b}
 -\Delta \varphi = -\nabla \cdot \nabla \varphi = -F \nabla \cdot F \nabla \psi \circ F =
-\nabla \cdot \nabla \psi \circ F = -\Delta \psi \circ F.
\ee
Moreover we have $h \circ F=\tilde{f} \circ F =f$ in $Q'$, directly from \eqref{eq3b}-\eqref{eq3c}, and 
$\partial_t \varphi = \partial_t \psi \circ F$, so \eqref{l3a} follows readily from this, \eqref{l2a} and \eqref{eq5}-\eqref{eq5b}.
\end{proof}

Armed with Lemma \ref{l3} we are now in position to establish the main result of this section.

\begin{proposition}
\label{pr1}
Assume that $V \in W^{2,\infty}(0,T;W^{2,\infty}(\Omega))$ satisfies \eqref{(1.2)} and $\| V \|_{W^{2,\infty}(0,T;W^{2,\infty}(\Omega))} \leq M$ for some $M \geq 0$.
Pick $r \geq r_0=c_0 (1+ M)$, where $c_0$ is the same as in \eqref{l3b}, and let $\xi \in \C^2 \setminus \R^2$ fulfill \eqref{llc} and
$| \Im \xi |=r$.
Then for all $\theta \in [0,2 \pi)$ and $k \in \Z$, there exists $w_{k,\theta}\in H^2(0,T;H_{\sharp,\theta}^{2}(\Omega '))$ obeying
\bel{t2a}
\| w_{k,\theta} \|_{H^2(0,T;H^{2}(\Omega '))} \leq \frac{c}{r} \left( 1+|k| \right),
\ee
for some constant $c>0$\footnote{Actually $c$ depends only on $T$, $|\omega|$ and $M$.} {independent} of $r$, $k$ and $\theta$, such that the function
\bel{t2r}
u_{k,\theta}(t,x)=\left(e^{i\theta x_1}+w_{k,\theta}(t,x) \right) e^{-i \left((\xi \cdot \xi+ 4 \pi^2 k^2)t+2 \pi k x_1+x'\cdot\xi \right)},\ (t,x)=(t,x_1,x') \in Q',
\ee
is a $H^2(0,T;H_{\sharp,\theta}^2(\Omega'))$-solution to the equation \eqref{(4.1)}.
\end{proposition}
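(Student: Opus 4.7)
The plan is to substitute the ansatz \eqref{t2r} into \eqref{(4.1)}, derive a linear PDE for the remainder $w_{k,\theta}$, invert the leading transport--Schr\"odinger operator using Lemma \ref{l3}, and close the equation by a Neumann series in the quasi-periodic space $H^2(0,T;H^2_{\sharp,\theta}(\Omega'))$.

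First I observe that the exponential factor $\Phi(t,x):=e^{-i((\xi\cdot\xi+4\pi^2 k^2)t+2\pi k x_1+x'\cdot\xi)}$ is itself a free-Schr\"odinger solution, and satisfies $\partial_{x_1}\Phi/\Phi=-2i\pi k$ and $\nabla_{x'}\Phi/\Phi=-i\xi$. Expanding $(-i\partial_t-\Delta+V)\bigl[(e^{i\theta x_1}+w_{k,\theta})\Phi\bigr]=0$ and dividing by $\Phi$ therefore produces the equivalent equation $(L_{k,\xi}+V)(e^{i\theta x_1}+w_{k,\theta})=0$ in $Q'$, where $L_{k,\xi}:=-i\partial_t-\Delta+4i\pi k\partial_{x_1}+2i\xi\cdot\nabla_{x'}$ is precisely the operator inverted by $E_{k,\theta}$ in Lemma \ref{l3}. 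A direct computation gives $L_{k,\xi}(e^{i\theta x_1})=(\theta^2-4\pi k\theta)e^{i\theta x_1}$, so $w_{k,\theta}$ must solve
\begin{equation*}
L_{k,\xi}w_{k,\theta}+Vw_{k,\theta}=f_{k,\theta}\quad\mbox{in}\ Q',\qquad f_{k,\theta}:=-(\theta^2-4\pi k\theta+V)e^{i\theta x_1}.
\end{equation*}

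Next I apply Lemma \ref{l3} to recast the problem as the fixed-point equation $w=E_{k,\theta}f_{k,\theta}-E_{k,\theta}(Vw)$ in $H^2(0,T;H^2_{\sharp,\theta}(\Omega'))$. The crucial point is that the $1$-periodicity condition \eqref{(1.2)} of $V$ ensures that multiplication by $V$ preserves quasi-periodicity, hence the multiplier $M_V$ maps $H^2(0,T;H^2_{\sharp,\theta}(\Omega'))$ into itself, with $\|M_V\|\leq C\|V\|_{W^{2,\infty}(0,T;W^{2,\infty}(\Omega'))}\leq CM$ by the Banach algebra property of $W^{2,\infty}$. Combining with \eqref{l3b} I obtain $\|E_{k,\theta}\circ M_V\|\leq c_0 CM/r$; absorbing the constant $C$ into $c_0$ (i.e.\ replacing the $c_0$ of \eqref{l3b} by a slightly larger value if necessary), the choice $r\geq r_0=c_0(1+M)$ makes $E_{k,\theta}\circ M_V$ a strict contraction, so $I+E_{k,\theta}\circ M_V$ is boundedly invertible by Neumann series. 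Setting $w_{k,\theta}:=(I+E_{k,\theta}\circ M_V)^{-1}E_{k,\theta}f_{k,\theta}$ then yields the desired element of $H^2(0,T;H^2_{\sharp,\theta}(\Omega'))$. Estimate \eqref{t2a} follows from the uniform source bound $\|f_{k,\theta}\|_{H^2(0,T;H^2(\Omega'))}\leq C(1+|k|)$ (valid because $|\theta|\leq 2\pi$, $\|e^{i\theta x_1}\|_{H^2(\Omega')}$ is bounded independently of $\theta$, and $\|V\|\leq M$), combined with $\|E_{k,\theta}\|\leq c_0/r$ and the uniform bound on the Neumann inverse.

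The main subtlety, and essentially the only non-routine point, is making sure the entire iteration lives in $H^2(0,T;H^2_{\sharp,\theta}(\Omega'))$: this rests both on the $1$-periodicity of $V$ (so that $Vw$ inherits the quasi-periodic boundary conditions of $w$) and on the fact that $E_{k,\theta}$ was built in Lemma \ref{l3} precisely to output into the quasi-periodic space. The $(1+|k|)$-growth in \eqref{t2a} is traceable to the single source term $L_{k,\xi}(e^{i\theta x_1})=(\theta^2-4\pi k\theta)e^{i\theta x_1}$, and the uniformity in $\theta$ and $k$ of all constants in the argument comes directly from the corresponding uniformity in \eqref{l3b}.
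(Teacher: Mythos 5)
Your proof is correct and follows essentially the same route as the paper: reduce \eqref{(4.1)} to the transport--Schr\"odinger equation $L_{k,\xi}w + Vw = -(\theta^2 - 4\pi k\theta + V)e^{i\theta x_1}$, invert $L_{k,\xi}$ with $E_{k,\theta}$ from Lemma \ref{l3}, and close via a contraction/Neumann-series argument using $r \geq r_0$ to guarantee $\|E_{k,\theta}\circ M_V\| < 1$, with the $(1+|k|)$ growth traced to the $-4\pi k\theta$ term in the source. The paper phrases the same argument as a Banach fixed point of $G_{k,\theta}:q\mapsto -E_{k,\theta}(Vq + e^{i\theta x_1}W_{k,\theta})$ on a ball of explicit radius rather than as a Neumann inverse, which is merely a presentational difference; your proposal also makes explicit the (correct and important) observation that periodicity of $V$ is what keeps the iteration inside $H^2(0,T;H^2_{\sharp,\theta}(\Omega'))$, a point the paper leaves implicit.
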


\begin{proof} 
A direct calculation shows that $u_{k,\theta}$ fulfills \eqref{(4.1)} if
and only if $w_{k,\theta}$ is solution to 
\bel{t2b}
\left\{
\begin{array}{ll}
(-i\partial_t  - \Delta  + 4 i\pi k \partial_{x_1}+ 2i  \xi \cdot \nabla_{x'} + V)w + e^{i\theta x_1} W_{k,\theta} = 0 & \mathrm{in}\ Q', \\
w (\cdot ,1,\cdot )=e^{i\theta}w (\cdot ,0,\cdot) & \mathrm{on}\ (0,T) \times \omega,  \\
\partial_{x_1}w (\cdot ,1,\cdot )=e^{i\theta}\partial_{x_1}w (\cdot ,0,\cdot) & \mathrm{on}\ (0,T) \times \omega,
\end{array}
\right.
\ee
with
\bel{t2bb}
W_{k,\theta}=V + \theta^2 - 4 \pi k \theta.
\ee
In light of \eqref{t2b}-\eqref{t2bb} we introduce the map 
$$
\begin{array}{lccc}
G_{k,\theta}  :  & H^2(0,T;H_{\sharp,\theta}^{2}(\Omega')) & \longrightarrow & H^2(0,T;H_{\sharp,\theta}^{2}(\Omega'))\\
&  q & \longmapsto & -E_{k,\theta} \left( V q + e^{i \theta x_1} W_{k,\theta} \right),
\end{array}
$$
set
\bel{eq6a}
M = 12 \pi^2(3 T | \omega |)^{1 \slash 2}(4 \pi^2 + \| V \| + 8 \pi^2 |k|) ,
\ee
where $\| V \|$ is a shorthand for $\| V \|_{W^{2,\infty}(0,T;W^{2,\infty}(\Omega))}$, and notice that
\bel{eq6b}
\left\| W_{k,\theta} e^{i \theta x_1} \right\|_{H^2(0,T;H^{2}(\Omega '))} \leq M.
\ee
Then we have
$$
 \| G_{k,\theta} q \|_{H^2(0,T;H^{2}(\Omega'))} \leq \frac{c_0}{r} \left( \| V \| \| q \|_{H^2(0,T;H^{2}(\Omega'))} + M \right),\ q \in H^2(0,T;H_{\sharp,\theta}^{2}(\Omega ')),
$$
in virtue of \eqref{l3b} and \eqref{eq6b} .
From this and the condition $r \geq r_0$, involving,
\bel{eq6c}
r=|\Im \xi| \geq c_0(1+\| V \|),
\ee
then follows that $\| G_{k,\theta} q \|_{H^2(0,T;H^{2}(\Omega_*))} \leq M$
for all $q$ taken in the ball $B_{M}$ centered at the origin with radius $M$ in $H^2(0,T;H_{\sharp,\theta}^{2}(\Omega'))$. Moreover, it holds true that
$$
\| G_{k,\theta} q-G_{k,\theta} \tilde{q} \|_{H^2(0,T;H^{2}(\Omega'))} \leq \frac{\| q-\tilde{q} \|_{H^2(0,T;H^{2}(\Omega '))}}{2},\ q, \tilde{q}\in B_{M}, 
$$
hence $G_{k,\theta}$ has a unique fixed point $w_{k,\theta} \in H^2(0,T;H_{\sharp,\theta}^{2}(\Omega'))$. Further, by applying Lemma \ref{l3} with
\[ 
f= -\left( V w_{k,\theta} +  e^{i \theta x_1} W_{k,\theta} \right)\in H^2(0,T;H^{2}(\Omega')), 
\]
we deduce from \eqref{l3a} that $w_{k,\theta}=E_{k,\theta} f$ is a solution to 
\eqref{t2b}. Last, taking into account the identity $\| w_{k,\theta} \|_{H^2(0,T;H^{2}(\Omega'))} = \| G_{k,\theta} w_{k,\theta} \|_{H^2(0,T;H^{2}(\Omega '))}$, we get that
\beas
\| w_{k,\theta} \|_{H^2(0,T;H_\theta^2(\Omega'))} & \leq & \| G_{k,\theta} w_{k,\theta} -G_{k,\theta} \underline{0} \|_{H^2(0,T;H^{2}(\Omega '))}+ \| G_{k,\theta} \underline{0} \|_{H^2(0,T;H^{2}(\Omega '))}
\\
& \leq & \| E_{k,\theta}(V w_{k,\theta}) \|_{H^2(0,T;H^{2}(\Omega '))} +
\left\| E_{k,\theta} \left( e^{i \theta x_1} W_{k,\theta} \right) \right\|_{H^2(0,T;H^{2}(\Omega '))} 
\\
& \leq & \frac{c_0}{r} \left( \| V \| \| w_{k,\theta} \|_{H^2(0,T;H^{2}(\Omega '))} + M \right), 
\eeas
directly from \eqref{l3b} and \eqref{eq6b}. Here $\underline{0}$ denotes the function which is identically zero in $\Omega'$.
From this and \eqref{eq6c} then follows that
$\| w_{k,\theta} \|_{H^2(0,T;H^{2}(\Omega '))} \leq (2c_0 \slash r) M$, which, combined with \eqref{eq6a}, entails \eqref{t2a}.
\end{proof}

\section{Stability estimate}
\label{sec-si}
This section contains the proof of Theorem \ref{thm2}, which, along with \eqref{c20}, yields Theorem \ref{thm1}. We start by estabilishing two auxiliary results.

\subsection{Auxiliary results}
In view of deriving Lemma \ref{l4} from Proposition \ref{pr1}, we first prove the following technical result.
\begin{lemma}
\label{l1}
For all $r>0$ and $\zeta=(\eta,\ell) \in \R^2 \times\R$ with $\eta \neq 0_{\R^2}$, there exists $\zeta_j=\zeta_j(r,\eta,\ell)=(\xi_j,\tau_j) \in\C^2\times\R$, $j=1,2$, such that we have
\bel{lla}
| \Im {\xi_j} |=r,\ \ \tau_j=\xi_j\cdot\xi_j,\ \ \zeta_1-\overline{\zeta_2}=\zeta,\ \Re{\xi_j} \cdot \Im{\xi_j}=0,
\ee
and
\bel{llb}
|\xi_j| \leq \frac{1}{2} \left( | \eta | + \frac{| \ell |}{| \eta |} \right) + r,\
| \tau_j | \leq | \eta |^2 + \frac{\ell^2}{| \eta |^2} + 2r^2.
\ee
\end{lemma}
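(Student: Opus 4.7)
\medskip

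\noindent\textbf{Proof proposal.} The plan is to reduce the problem to elementary two-dimensional linear algebra by decomposing $\xi_j=\alpha_j+i\beta_j$ with $\alpha_j,\beta_j\in\R^2$. First I translate each requirement in \eqref{lla} into a constraint on $(\alpha_j,\beta_j)$. The identity $\zeta_1-\overline{\zeta_2}=\zeta$ with $\zeta=(\eta,\ell)\in\R^2\times\R$ splits into the real conditions $\alpha_1-\alpha_2=\eta$, $\beta_1+\beta_2=0$ and $\tau_1-\tau_2=\ell$. Since $\xi_j\cdot\xi_j=|\alpha_j|^2-|\beta_j|^2+2i\,\alpha_j\cdot\beta_j$, the constraint that $\tau_j$ be real is automatically ensured by the orthogonality condition $\Re\xi_j\cdot\Im\xi_j=\alpha_j\cdot\beta_j=0$, and together with $|\beta_j|=r$ this yields
\[
\tau_j=|\alpha_j|^2-r^2.
\]

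Next I exploit the geometry in $\R^2$. Writing $\beta:=\beta_1=-\beta_2$, the two orthogonality conditions force both $\alpha_1$ and $\alpha_2$ to lie on the line $\beta^\perp$; combined with $\alpha_1-\alpha_2=\eta$, this forces $\eta\in\beta^\perp$, i.e.\ $\beta\perp\eta$. Since $\eta\neq 0$ the direction perpendicular to $\eta$ in $\R^2$ is one-dimensional, so I simply \emph{choose} $\beta\in\R^2$ of norm $r$ perpendicular to $\eta$, and then parametrise $\alpha_j=a_j\eta/|\eta|$ with $a_j\in\R$. The system $\alpha_1-\alpha_2=\eta$ and $\tau_1-\tau_2=\ell$ becomes
\[
a_1-a_2=|\eta|,\qquad a_1^2-a_2^2=\ell,
\]
whose unique solution is
\[
a_1=\tfrac{1}{2}\Bigl(|\eta|+\tfrac{\ell}{|\eta|}\Bigr),\qquad a_2=\tfrac{1}{2}\Bigl(\tfrac{\ell}{|\eta|}-|\eta|\Bigr),
\]
since $a_1+a_2=\ell/|\eta|$ is forced by the quadratic relation. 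This yields $\xi_j=a_j\eta/|\eta|+i\beta_j$ and $\tau_j=a_j^2-r^2$, and all four conditions of \eqref{lla} hold by construction.

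Finally I verify the estimates \eqref{llb}. The bound on $|\xi_j|$ is immediate from $|\xi_j|\le|\alpha_j|+|\beta_j|=|a_j|+r\le\tfrac{1}{2}(|\eta|+|\ell|/|\eta|)+r$. For $|\tau_j|=\bigl||\alpha_j|^2-r^2\bigr|\le a_j^2+r^2$, I expand
\[
a_j^2\le\tfrac{1}{4}\Bigl(|\eta|+\tfrac{|\ell|}{|\eta|}\Bigr)^2=\tfrac{1}{4}\Bigl(|\eta|^2+2|\ell|+\tfrac{\ell^2}{|\eta|^2}\Bigr)
\]
and apply the elementary inequality $2|\ell|\le|\eta|^2+\ell^2/|\eta|^2$ (AM--GM) to conclude $a_j^2\le\tfrac{1}{2}(|\eta|^2+\ell^2/|\eta|^2)$, whence $|\tau_j|\le|\eta|^2+\ell^2/|\eta|^2+2r^2$ after absorbing the factor $r^2$ twice.

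The construction is essentially algorithmic once the right parametrisation is identified; the only genuinely substantive observation is that the two orthogonality conditions, coupled with $\beta_1+\beta_2=0$ and $\alpha_1-\alpha_2=\eta$, collapse the problem to one dimension by forcing $\beta\perp\eta$. I expect no real obstacle beyond this geometric observation.
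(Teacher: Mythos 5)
Your proof is correct and produces exactly the same explicit choice of $\xi_j$ and $\tau_j$ as the paper, which simply writes down the formulas
$\xi_j=\tfrac{1}{2}\bigl((-1)^{j+1}+\tfrac{\ell}{|\eta|^2}\bigr)\eta+(-1)^j i\,\eta_r^\perp$ with $\eta_r^\perp=r\eta^\perp/|\eta^\perp|$ and $\tau_j=\tfrac14\bigl((-1)^{j+1}+\tfrac{\ell}{|\eta|^2}\bigr)^2|\eta|^2-r^2$, and asserts that a direct calculation gives \eqref{lla}--\eqref{llb}. You arrive at the same construction by first extracting the linear constraints from the decomposition $\xi_j=\alpha_j+i\beta_j$ and then solving them, and you carry out the bound verification (including the AM--GM step) that the paper leaves to the reader; the approach is otherwise identical.
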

\begin{proof}
Let $\eta^{\perp}$ be any non zero $\R^2$-vector, orthogonal to $\eta$
and put $\eta_r^{\perp}=r \eta^{\perp} \slash | \eta^{\perp} |$.
Then, a direct calculation shows that
$$ \xi_j=\frac{1}{2} \left( (-1)^{j+1} + \frac{\ell}{| \eta |^2} \right) \eta + (-1)^ji \eta_r^{\perp},\ \tau_j = \frac{1}{4} \left( (-1)^{j+1} + \frac{\ell}{| \eta |^2} \right)^2 | \eta |^2 - r^2,\ j=1,2, $$
fulfill \eqref{lla}-\eqref{llb}.
\end{proof}

In light of Proposition \ref{pr1} and Lemma \ref{l1} we may now derive the following:

\begin{lemma}
\label{l4}
Assume that $V_j \in W^{2,\infty}(0,T;W^{2,\infty}(\Omega))$, $j=1,2$, fulfill \eqref{(1.2)} and
fix $r \geq r_0=c_0 ( 1+M )>0$, where $M\geq\max_{j=1,2} \| V_j \|_{W^{2,\infty}(0,T;W^{2,\infty}(\Omega))}$ and $c_0$ is the same as in \eqref{l3b}. Pick $\zeta=(\eta,\ell) \in \R^2 \times \R$ with $\eta \neq 0_{\R^2}$, and let $\zeta_j=(\xi_j,\tau_j) \in\C^2\times\R$, $j=1,2$, be given by Lemma \ref{l1}. Then, there is a constant $C>0$ depending only on $T$, $| \omega |$ and $M$, such that for every $k \in \Z$ and $\theta \in [0,2 \pi)$, the function $u_{j,k,\theta}$, $j=1,2$, defined in Proposition \ref{pr1} by substituting $\xi_j$ for $\xi$, satisfies the estimate
\[
\| u_{j,k,\theta} \|_{H^2(0,T;H^2(\Omega '))} \leq C (1+\mathfrak{q}(\zeta,k))^{\frac{13}{2}} \frac{(1+r^2)^3}{r} e^{| \omega| r},\ k \in \Z,\ \theta \in [0,2 \pi),\ r \geq r_0,
\]
with
\[
\mathfrak{q}(\zeta,k)=\mathfrak{q}(\eta,\ell,k)=|\eta|^2+\frac{|\ell |}{|\eta |}+k^2. 
 \]
\end{lemma}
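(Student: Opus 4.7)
The plan is to write
\[
u_{j,k,\theta}(t,x) = \bigl(e^{i\theta x_1} + w_{j,k,\theta}(t,x)\bigr)\, e^{\Phi_j(t,x)}, \qquad \Phi_j(t,x) := -i\bigl((\tau_j + 4\pi^2 k^2)t + 2\pi k x_1 + x'\cdot \xi_j\bigr),
\]
and to evaluate $\|u_{j,k,\theta}\|_{H^2(0,T;H^2(\Omega'))}$ through a Leibniz-rule expansion of the mixed derivatives $\partial_t^m\partial^\alpha u_{j,k,\theta}$ for $0 \leq m \leq 2$ and $|\alpha|\leq 2$. Two ingredients are used: a pointwise bound on $e^{\Phi_j}$, and the amplitude control provided by Proposition \ref{pr1}.

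For the phase, I note that $|e^{\Phi_j(t,x)}| = e^{x'\cdot \Im\xi_j} \leq e^{|\omega| r}$ pointwise on $Q'$, since $|\Im\xi_j|=r$ by Lemma \ref{l1} and $x'\in\omega$. Each $\partial_t$ applied to $e^{\Phi_j}$ pulls out the scalar $-i(\tau_j + 4\pi^2 k^2)$, each $\partial_{x_1}$ pulls out $-2\pi i k$, and each $\partial_{x_j}$, $j=2,3$, pulls out the corresponding component of $-i\xi_j$. Combining Lemma \ref{l1} with the elementary inequalities $|\eta|^2 \leq \mathfrak{q}$, $|\ell|/|\eta|\leq\mathfrak{q}$ and $|k|\leq\mathfrak{q}^{1/2}$ yields
\[
|\xi_j| \leq C(1+\mathfrak{q})(1+r^2)^{1/2}, \qquad |\tau_j + 4\pi^2 k^2|\leq C(1+\mathfrak{q})^2(1+r^2),
\]
so the worst polynomial prefactor generated by differentiating $e^{\Phi_j}$ up to order two in time and order two in space is $C(1+\mathfrak{q})^6(1+r^2)^3$.

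For the amplitude $v = e^{i\theta x_1}+w_{j,k,\theta}$, the factor $e^{i\theta x_1}$ has $H^2(0,T;H^2(\Omega'))$-norm bounded by a constant depending only on $T$ and $\omega$, uniformly in $\theta\in[0,2\pi)$, while Proposition \ref{pr1} provides $\|w_{j,k,\theta}\|_{H^2(0,T;H^2(\Omega'))} \leq c(1+|k|)/r$. Summing the finitely many terms of the Leibniz expansion, estimating each by the product of the amplitude bound and the worst-case phase factor, and rewriting the amplitude contribution $1+(1+|k|)/r$ as a term of order $C(1+\mathfrak{q})^{1/2}/r$ through the lower bound $r\geq r_0 = c_0(1+M)$ and the inequality $1+|k|\leq (1+\mathfrak{q})^{1/2}$, then produces
\[
\|u_{j,k,\theta}\|_{H^2(0,T;H^2(\Omega'))} \leq C(1+\mathfrak{q})^{13/2}\frac{(1+r^2)^3}{r}e^{|\omega|r},
\]
with $C=C(T,|\omega|,M)$ from Proposition \ref{pr1}.

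The main obstacle will be the polynomial bookkeeping: a naive combination of the worst phase factor $(1+\mathfrak{q})^6(1+r^2)^3$ with the amplitude bound $1+(1+|k|)/r$ apparently produces an excess factor of $(1+r^2)^{1/2}$. Keeping the power of $(1+r^2)$ at exactly $3$ requires carefully separating, inside the Leibniz expansion, the uniformly bounded contribution from $e^{i\theta x_1}$ (which absorbs the ``$1$'' part) from the $(1+|k|)/r$-decaying contribution of $w_{j,k,\theta}$, and to invoke the lower bound $r\geq r_0$ to consolidate both into the claimed $(1+\mathfrak{q})^{1/2}/r$.
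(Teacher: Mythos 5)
Your approach is the same as the paper's: factor $u_{j,k,\theta}$ as amplitude times phase, bound the amplitude in $H^2(0,T;H^2(\Omega'))$ using Proposition \ref{pr1} together with the observation that $\|e^{i\theta x_1}\|_{H^2(0,T;H^2(\Omega'))}$ is bounded uniformly in $\theta$, and bound the phase in $W^{2,\infty}(0,T;W^{2,\infty}(\Omega'))$ by differentiating the exponential; the bounds you write on $|\xi_j|$, $|\tau_j+4\pi^2k^2|$, and the resulting $(1+\mathfrak{q})^6(1+r^2)^3$ phase prefactor match what the paper obtains. So there is no difference in method.

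The gap is exactly the one you flagged, and your proposed fix does not close it. You write that the amplitude contribution $1+(1+|k|)/r$ can be ``consolidated into $C(1+\mathfrak{q})^{1/2}/r$ through the lower bound $r\geq r_0$.'' That requires $1\leq C'(1+\mathfrak{q})^{1/2}/r$, i.e.\ an \emph{upper} bound $r\leq C'(1+\mathfrak{q})^{1/2}$; the lower bound $r\geq r_0$ gives no such thing, and $r$ ranges over $[r_0,\infty)$ while $\mathfrak{q}$ may stay of order one (take $\eta$ of unit length, $\ell=0$, $k=0$). The ``careful separation'' you sketch does not help either: $e^{i\theta x_1}e^{\Phi_j}$ is again a complex exponential whose second $t$- and second $x'$-derivatives bring out a factor $\sim|\tau_j|^2|\xi_j|^2\sim r^6$ when $\mathfrak{q}$ is fixed and $r$ large, so $\|e^{i\theta x_1}e^{\Phi_j}\|_{H^2(0,T;H^2(\Omega'))}\sim r^6e^{cr}$, whereas the claimed right-hand side is $\sim r^5e^{|\omega|r}$. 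The estimate with the $1/r$ factor is therefore slightly too sharp in $r$, and cannot be produced by this (or the paper's) argument; what the argument actually yields is the weaker
\[
\|u_{j,k,\theta}\|_{H^2(0,T;H^2(\Omega'))}\leq C(1+\mathfrak{q}(\zeta,k))^{13/2}(1+r^2)^3e^{|\omega|r},
\]
upon using $1+(1+|k|)/r\leq C(1+\mathfrak{q})^{1/2}$ for $r\geq \max(1,r_0)$. In fairness, the paper's own proof of this lemma is a single line (``combining the three above inequalities with \eqref{t2a} and \eqref{llb}'') and silently makes the same unjustified absorption; and the weaker bound is entirely sufficient for the stability estimate of section \ref{sec-si}, since it only changes polynomial powers of $r$ in \eqref{eq12}, which are killed by the exponential in $\gamma$ anyway. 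Still, as written, your step producing the $1/r$ factor does not hold and should either be dropped (proving the weaker inequality) or accompanied by an explicit justification.
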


\begin{proof}
In light of \eqref{t2r} we have
\beas
& & \| u_{j,k,\theta} \|_{H^2(0,T;H^2(\Omega '))} \\
& \leq &
\left( \| e^{i \theta x_1} \|_{H^2(0,T;H^2(\Omega '))} + \| w_{j,k,\theta} \|_{H^2(0,T;H^{2}(\Omega '))} \right) \| e^{-i \left((\tau_j+ 4 \pi^2 k^2)t+2 \pi k x_1+x'\cdot\xi_j \right)} \|_{W^{2,\infty}(0,T;W^{2,\infty}(\Omega '))},
\eeas
with 
\[
\| e^{-i \left((\tau_j+4 \pi^2 k^2)t+ 2 \pi k x_1+x'\cdot\xi_j \right)} \|_{W^{2,\infty}(0,T;W^{2,\infty}(\Omega '))}
\leq  (1 + |\tau_j|+ 4 \pi^2 k^2)^2 (1+ | \xi_j |^2+4 \pi^2 k^2 ) e^{|\omega| r},
\]
and
\[ \| e^{i \theta x_1} \|_{H^2(0,T;H^{2}(\Omega '))}
\leq c ( T | \omega |)^{1 \slash 2},
\]
for some positive constant $c$ which is independent of $r$, $\theta$, $\zeta$, $k$, $T$ and $\omega$.
Thus we get the desired result by combining the three above inequalities with \eqref{t2a} and \eqref{llb}.
\end{proof}

\subsection{Proof of Theorem \ref{thm2}}
Let $\zeta=(\eta,\ell)$, $r$ and $\zeta_j=(\xi_j,\tau_j)$, $j=1,2$, be as in Lemma \ref{l4}, fix $k\in \Z$, and put
\[
(k_1,k_2)=\left\{ \begin{array}{ll} ( k \slash 2,-k \slash 2)\ & \textrm{if}\ k\ \textrm{is even} \\ ((k+1) \slash 2,-(k-1) \slash 2) & \textrm{if}\ k\ \textrm{is odd}. \end{array} \right.
\]
Further we pick $\theta \in [0,2 \pi)$ and note $u_j$, $j=1,2$, the optics geometric solution $u_{j,k_j,\theta}$, defined by Lemma \ref{l4}. In light of Lemma \ref{lm-c} there is a unique solution $v \in L^2(0,T;H_{\sharp,\theta}^2(\Omega ')) \cap H^1(0,T;L^2(\Omega '))$ to the boundary value problem
\bel{t1c}
\left\{ \begin{array}{ll}
(-i\partial_t  + \Delta  + V_2) v = 0\quad  &\textrm{in}\; Q'
\\ 
v(0,\cdot )= u_1(0,\cdot ) &\textrm{in}\; \Omega ',
\\ 
v = u_1 &\textrm{on}\ \Sigma_*', 
\end{array}\right.
\ee
in such a way that $u=v-u_1$ is solution to the following system:
\bel{t1d}
\left\{ \begin{array}{ll}
(-i\partial_t  + \Delta  + V_2) u = (V_1-V_2)u_1\quad  &\textrm{in}\; Q'
\\ 
u(0,\cdot )= 0 &\textrm{in}\; \Omega ',
\\ 
u = 0 &\textrm{on}\ \Sigma_*', 
\\
u(\cdot ,1,\cdot )=e^{i\theta}u(\cdot ,0,\cdot ) & \mathrm{on}\ (0,T) \times \omega
\\
\partial_{x_1} u(\cdot ,1,\cdot )=e^{i\theta}\partial_{x_1}u(\cdot ,0,\cdot )& \mathrm{on}\ (0,T) \times \omega. \end{array}\right.
\ee
Therefore we get
\bea \label{eq8}
\int_{Q'}(V_1-V_2)u_1 \overline{u_2} \, dt dx 
= \int_{\Sigma_*'} \partial_\nu u \overline{u_2}\, dt d\sigma (x)
-i \int_{\Omega '} u(T,\cdot ) \overline{u_2(T,\cdot )}\, dx, 
\eea
by integrating by parts and taking into account the quasi-periodic boundary conditions satisfied by $u$ and $u_2$.
Notice from \eqref{t1c}-\eqref{t1d} that $\partial_\nu u= \left( \Lambda^1_{V_2,\theta}-\Lambda^1_{V_1,\theta}\right)(\mathfrak{g}_1)$ and 
$u(T,.)=\left( \Lambda^2_{V_2,\theta}-\Lambda^2_{V_1,\theta}\right)(\mathfrak{g}_1)$, where
\[ 
\mathfrak{g}_1 = \left( u_1{_{| \Sigma_*'}} , u_1(0,.) \right) \in \mathscr{X}'_{0,\theta}.
\]
Thus, putting
\[ 
\beta_k = \left\{ \begin{array}{ll} 0 & \textrm{if}\ k\ \textrm{is even or}\ k \in \R \setminus \Z \\ 4 \pi^2 & \textrm{if}\ k\ \textrm{is odd}, \end{array} \right. 
\]
for all $k \in \Z$, and
\bel{eq8b}
\varrho=\varrho_{k,\theta}=e^{-i \theta x_1} w_1 + e^{i \theta x_1} \overline{w_2} + w_1 \overline{w_2},
\ee
we deduce from \eqref{lla}, \eqref{t2r} and \eqref{eq8} that
\bel{eq9}
\int_{Q'}(V_1-V_2) e^{-i \left((\ell+ \beta_k k) t  + 2 \pi k x_1 + x' \cdot \eta\right)} dt dx =A+B+C, 
\ee
with
\bea
A & = & -\int_{Q'}(V_2-V_1) \varrho(t,x) e^{-i \left((\ell+\beta_k k) t  + 2 \pi k x_1 + x' \cdot \eta \right)} dt dx,
\label{eq9a} 
\\
B & =  &\int_{\Sigma_*'} \left( \Lambda^1_{V_2,\theta}-\Lambda^1_{V_1,\theta}\right)(\mathfrak{g}_1)\overline{u_2}\, dt d\sigma (x), \label{eq9b} 
\\
C & = & -i \int_{\Omega '} \left( \Lambda^2_{V_2,\theta}-\Lambda^2_{V_1,\theta}\right)(\mathfrak{g}_1) \overline{u_2(T,\cdot)}\, dx.
\label{eq9c}
\eea
Next, we introduce
\[ 
V(t,x) = \left\{ \begin{array}{cl} (V_2-V_1)(t,x) & \textrm{if}\ (t,x) \in Q, \\ 0 & \textrm{if}\
(t,x) \in \R^4 \setminus Q, \end{array} \right. 
\]
and 
\[ 
\phi_k(x_1) = e^{i 2 \pi k x_1},\ x_1 \in \R,\ k \in \Z, 
\]
so \eqref{eq9} can be rewritten as
\bel{eq10}
\int_{Q'}(V_1-V_2) e^{-i \left((\ell+ \beta_k k) t  + 2 \pi k x_1+ x' \cdot \eta \right)}\, dt dx
= \left\langle \widehat{V}(\ell+ \beta_k k, \eta) , \phi_k \right\rangle_{L^2(0,1)}, 
\ee
where $\widehat{V}$ stands for the partial Fourier transform of $V$ with respect to  $t \in \R$ and $x' \in \R^2$.
Further, in light of \eqref{t2a} and \eqref{eq8b} it holds true that
\beas
\| \varrho \|_{L^1(Q')} & \leq  &(T | \omega |)^{1 \slash 2} \left(\| w_1\|_{L^2(Q')}+ \| w_2\|_{L^2(Q')} \right) + \| w_1\|_{L^2(Q')}\| w_2\|_{L^2(Q')}
\\
& \leq  & \frac{C}{r} \left( (T | \omega |)^{1 \slash 2} (2+|k_1|+ |k_2|) + \frac{C}{r} (1+|k_1|)(1+|k_2|) \right) 
\\
& \leq  &  c'\left( \frac{1+|k|}{r} \right)^2, 
\eeas
where the constant $c'>0$ depends only on $T$, $| \omega|$ and $M$.
Since $\| V_1-V_2\|_{\infty} \leq 2M$, it follows from this and \eqref{eq9a} upon substituting $c'$ for $4M c'$ in the above estimate that
\bel{eq10a}
| A | \leq \| V_1-V_2\|_{\infty} \| \varrho \|_{L^1(Q')} \leq c'\frac{(1+\mathfrak{q}(\zeta,k))}{r^2},
\ee
where $\mathfrak{q}$ is defined in Lemma \ref{l4}.
Moreover, we have
\[
 |B| \leq \| (\Lambda_{V_2,\theta}^1-\Lambda_{V_1,\theta}^1)(\mathfrak{g}_1) \|_{L^2(\Sigma_*')} \| u_2 \|_{L^2(\Sigma_*')}, 
 \]
by \eqref{eq9b} and
\[
|C| \leq \| (\Lambda_{V_2,\theta}^2-\Lambda_{V_1,\theta}^2)(\mathfrak{g}_1) \|_{L^2(\Omega ')} \| u_2 \|_{L^2(\Omega ')}, 
\]
from \eqref{eq9c}, whence
\bea
|B|+|C| &\leq  & \left( \| (\Lambda_{V_2,\theta}^1-\Lambda_{V_1,\theta}^1)(\mathfrak{g}_1) \|_{L^2(\Sigma_*')}^2 + \| (\Lambda_{V_2,\theta}^2-\Lambda_{V_1,\theta}^2)(\mathfrak{g}_1) \|_{L^2(\Omega ')}^2 \right)^{1 \slash 2} \label{eq10b} \\
& & \times \left( \| u_2 \|_{L^2(\Sigma_*')}^2 +
\| u_2(T,.) \|_{L^2(\Omega ')}^2 \right)^{1 \slash 2} \nonumber \\
& \leq  & \left\| (\Lambda_{V_2,\theta}-\Lambda_{V_1,\theta})(\mathfrak{g}_1) \right\|_{L^2(\Sigma_*') \times L^2(\Omega ')} \| \mathfrak{g}_2 \|_{L^2(\Sigma_*') \times L^2(\Omega ')} \nonumber \\
& \leq & \| \Lambda_{V_2,\theta}-\Lambda_{V_1,\theta} \|_{\B(\mathscr{X}'_{0,\theta},\mathscr{X}'_1)} \| \mathfrak{g}_1 \|_{\mathscr{X}'_{0,\theta}} \| \| \mathfrak{g}_2 \|_{\mathscr{X}'_1}, \nonumber
\eea
where we note
\[
\mathfrak{g}_2=\left( u_2{_{| \Sigma_*'}} , u_2(T,.) \right).
\]
The next step is to use that 
$\| \mathfrak{g}_1 \| _{\mathscr{X}'_{0,\theta}}$ and
$\| \mathfrak{g}_2 \| _{\mathscr{X}'_{0,\theta}}$ are both upper bounded, up to some multiplicative constant depending only on $T$ and $\omega$, by $\| u_j \|_{H^2(0,T;H^{2}(\Omega '))}$.
Therefore \eqref{eq10b} and Lemma \ref{l4} yield
\bel{eq10d}
|B|+|C| \leq C^2 \| \Lambda_{V_2,\theta}^1-\Lambda_{V_1,\theta}^1 \|_{\B(\mathscr{X}'_{0,\theta},\mathscr{X}'_1)}
(1+\mathfrak{q}(\zeta,k))^{13} \frac{(1+r^2)^6}{r^2} e^{2 |\omega| r },\ r \geq r_0.
\ee
Now, putting \eqref{eq9}--\eqref{eq10a} and
\eqref{eq10d} together,  we end up getting that
\bel{eq10e}
\left| \left\langle \widehat{V}(\ell + \beta_k k, \eta) , \phi_k \right\rangle_{L^2(0,1)} \right|
\leq c'' \frac{(1+\mathfrak{q}(\zeta,k))}{r^2} \left( 1 + \gamma (1+\mathfrak{q}(\zeta,k))^{12} (1+r^2)^6 e^{2 |\omega| r} \right),\ r \geq r_0,
\ee
where 
\[
\gamma=\| \Lambda_{V_2,\theta}-\Lambda_{V_1,\theta } \|_{\B(\mathscr{X}'_{0,\theta},\mathscr{X}'_1)}
\] 
and the constant $c''>0$ is independent of $k$, $r$ and $\zeta=(\eta,\ell)$.

The next step is to apply Parseval-Plancherel theorem, getting
\bel{eq11}
\| V_2 - V_1 \|_{L^2(Q')}^2=\| V \|_{L^2(\R \times (0,1)\times \R^2)}^2 = \sum_{k \in \Z} \int_{\R^3} | \hat{\mathfrak{v}}(\zeta,k) |^2 d \zeta,
\ee
where $\hat{\mathfrak{v}}(\zeta,k)=\hat{\mathfrak{v}}(\ell,\eta,k)$ stands for
$\langle \widehat{V}(\ell, \eta) , \phi_k \rangle_{L^2(0,1)}$ for all $(\zeta,k) \in \R^3 \times \Z$. By splitting $\int_{\R^3} | \hat{\mathfrak{v}}(\zeta,k) |^2 d \zeta$, $k \in \Z$, into the sum 
$\int_{\R^3} | \hat{\mathfrak{v}}(\ell,\eta,2k) |^2 d \ell d \eta
+ \int_{\R^3} | \hat{\mathfrak{v}}(\ell,\eta,2k+1) |^2 d \ell d \eta$ and performing the change 
of variable $\ell'=\ell-(2k+1)$ in the last integral, we may actually rewrite \eqref{eq11} as
\bel{eq11a}
\| V_2 - V_1 \|_{L^2(Q')}^2  =  \sum_{k \in \Z} \int_{\R^3} | \hat{\mathfrak{v}}(\ell+ \beta_k k,\eta,k) |^2 d \ell d \eta 
= \int_{\R^4} | \hat{\mathfrak{v}}(\ell+ \beta_k k,\eta,k) |^2 d \ell d \eta d \mu(k),
\ee
where $\mu=\sum_{n \in \Z} \delta_n$.
Putting $B_\rho=\{(\zeta,k) \in \R^3 \times \Z,\ | (\zeta,k) | < \rho \}$ for some $\rho>0$ we shall make precise below, we treat $\int_{B_{\rho}} | \hat{\mathfrak{v}}(\ell+ \beta_k k,\eta,k) |^2 d \ell d \eta d \mu(k)$ and $\int_{\R^4 \setminus B_{\rho}} | \hat{\mathfrak{v}}(\ell+ \beta_k k,\eta,k) |^2 d \ell d \eta d \mu(k)$ separately.
We start by examining the last integral. 
To do that we first notice that $(\ell,\eta,k) \mapsto |(\ell + \beta_k k,\eta,k)|$ is a norm in $\mathbb{R}^4$ so we may find
a constant $C_1>0$ such that the estimate
\[
|(\ell,\eta,k)| \leq C_1 |(\ell + \beta_k k,\eta,k)|. 
\]
holds for every $(\ell,\eta,k) \in \R^4$.
As a consequence we have
\beas
\int_{\R^4 \setminus B_{\rho}} | \hat{\mathfrak{v}}(\ell+ \beta_k k,\eta,k) |^2 d \ell d \eta d \mu(k) 
& \leq  & \frac{1}{\rho^2} \int_{\R^4 \setminus B_{\rho}} |(\ell,\eta,k)|^2  |\hat{\mathfrak{v}}(\ell+ \beta_k k,\eta,k) |^2 d \ell d \eta d \mu(k) \\
& \leq  & \frac{C_1^2}{\rho^2} \int_{\R^4 \setminus B_{\rho}} |(\ell + \beta_k k,\eta,k)|^2  |\hat{\mathfrak{v}}(\ell+ \beta_k k,\eta,k) |^2 d \ell d \eta d \mu(k) \\
& \leq & \frac{C_1^2}{\rho^2} \int_{\R^4} (1+|(\ell + \beta_k k,\eta,k)|^2)  |\hat{\mathfrak{v}}(\ell+ \beta_k k,\eta,k) |^2 d \ell d \eta d \mu(k). 
\eeas
The change of variable $\ell'= \ell+ \beta_k k$ in the last integral then yields
$$
\int_{\R^4 \setminus B_{\rho}} | \hat{\mathfrak{v}}(\ell+ \beta_k k,\eta,k) |^2 d \ell d \eta d \mu(k)
\leq  \frac{C_1^2}{\rho^2} \int_{\R^4} (1+|(\zeta,k)|^2)  |\hat{\mathfrak{v}}(\zeta,k) |^2 d \zeta d \mu(k) 
\leq \frac{C_1^2}{\rho^2} \| V \|_{H^1(Q')}^2,
$$
so we end up getting that
\bel{eq11b}
\int_{\R^4 \setminus B_{\rho}} | \hat{\mathfrak{v}}(\ell+ \beta_k k,\eta,k) |^2 d \ell d \eta d \mu(k) \leq \frac{4 C_1^2 M^2}{\rho^2}.
\ee
Further, we introduce $\mathcal{C}_\rho=\{ (\zeta,k) \in \R^4,\ | \eta | <\rho^{-1} \}$ in such a way that the integral
$\int_{ B_{\rho} \cap \mathcal{C}_\rho} | \hat{\mathfrak{v}}(\ell+ \beta_k k,\eta,k) |^2  d \ell d \eta d \mu(k)$ is upper bounded by
\beas
\int_{ \mathcal{C}_{\rho} } | \hat{\mathfrak{v}}(\ell+ \beta_k k,\eta,k) |^2 d \ell d \eta d \mu(k)
& \leq  & \frac{\pi}{\rho^2} \sup_{|\eta| \leq \rho^{-1}} \int_{\R^2} | \hat{\mathfrak{v}}(\ell+ \beta_k k,\eta,k) |^2 d \ell d \mu(k) \nonumber \\
& \leq  & \frac{\pi}{\rho^2} \sup_{|\eta| \leq \rho^{-1}} \sum_{k \in \Z} \int_{\R} | \hat{\mathfrak{v}}(\ell+ \beta_k k,\eta,k) |^2 d \ell \nonumber \\
& \leq  & \frac{\pi}{\rho^2} \sup_{|\eta| \leq \rho^{-1}} \sum_{k \in \Z} \| \hat{\mathfrak{v}}(.,\eta,k) \|_{L^2(\R)}^2,
\eeas
giving
\bel{eq11c}
\int_{ B_{\rho} \cap \mathcal{C}_\rho} | \hat{\mathfrak{v}}(\ell+ \beta_k k,\eta,k) |^2  d \ell d \eta d \mu(k) 
\leq \frac{\pi}{\rho^2} \| V \|_{L_{x'}^{\infty}(\R^2;L_{t,x_1}^2(\R \times (0,1)))}^2 \leq \frac{4\pi M^2}{\rho^2},
\ee
and
\[
\mathfrak{q}(\zeta,k) \leq 3 \rho^2,\ (\zeta,k) \in B_\rho \cap (\R^4 \setminus \mathcal{C}_\rho),\ \rho \geq 1.
\]
From \eqref{eq10e} and the above estimate then follows that
\[
| \hat{\mathfrak{v}}(\ell+ \beta_k k,\eta,k) | \leq c''\frac{\rho^2}{r^2} \left( 1 + \gamma \rho^{24} r^{12} e^{2 |\omega | r} \right),\ (\zeta,k) \in B_\rho \cap (\R^4 \setminus \mathcal{C}_\rho),\ \rho \geq 1,\ r \geq \max(1,r_0),
\]
whence
\bel{eq11d}
\int_{ B_{\rho} \cap (\R^4 \setminus \mathcal{C}_\rho)} | \hat{\mathfrak{v}}(\ell+ \beta_k k,\eta,k) |^2 d \ell d \eta d \mu(k) 
\leq c''\frac{\rho^{8}}{r^4} \left( 1+ \gamma^2 \rho^{48} r^{24} e^{4 | \omega | r} \right),\ \rho \geq 1,\ r \geq \max(1,r_0),
\ee
upon eventually substituting $c''$ for some suitable algebraic expression of $c''$.

Last, putting \eqref{eq11a}--\eqref{eq11d} together we find out that
\bel{eq12}
\| V_2 - V_1 \|_{L^2(Q')}^2 \leq C_2 \left( \frac{1}{\rho^2} + \frac{\rho^8}{r^4} + \gamma^2 \rho^{56} r^{20}  e^{4 | \omega | r} \right),\ \rho \geq 1,\ r \geq \max(1,r_0),
\ee
where the constant $C_2>0$ depends only on $T$, $\omega$ and $M$. By taking
$r=r_1=\frac{1}{4 | \omega |} \ln \gamma^{-1}$ and $\rho=r_1^{2 \slash 5}$ in \eqref{eq12}, which is permitted since $r_1> \max(1,r_0)$ from \eqref{(1.11)}, we find out that 
\bel{eq13}
\| V_2 - V_1 \|_{L^2(Q')}^2 \leq C_3 \left( 1 + \gamma \left( \ln \gamma^{-1} \right)^{16 \slash 5} \right) \left( \ln \gamma^{-1} \right)^{-4 \slash 5},
\ee
where $C_3$ is another positive constant depending only on $T$, $\omega$ and $M$. Now, since 
$\sup_{0<\gamma \leq \gamma_*} \left( \gamma \left( \ln \gamma^{-1} \right)^{16 \slash 5}\right)$ is just another constant depending only on $T$, $\omega$ and $M$, then \eqref{(1.10)} follows readily from \eqref{eq13}.


\bigskip


\end{document}